\newtheorem{theorem}{Theorem}[section]
\newtheorem{lemma}[theorem]{Lemma}
\theoremstyle{definition}
\newtheorem{example}[theorem]{Example}
\newtheorem{proposition}[theorem]{Proposition}
\newtheorem{corollary}[theorem]{Corollary}
\newtheorem{remark}[theorem]{Remark}
\newtheorem{conjecture}[theorem]{Conjecture}
\theoremstyle{remark}
\newcommand{\be}{\begin{equation}}
\newcommand{\ee}{\end{equation}}
\numberwithin{equation}{section}
\begin{document}
\title{Characteristic numbers, Jiang subgroup and non-positive curvature}

\author{Ping Li}
\address{School of Mathematical Sciences, Fudan University, Shanghai 200433, China}

\address{School of Mathematical Sciences, Tongji University, Shanghai 200092, China}

\email{pinglimath@gmail.com\\
pingli@tongji.edu.cn}
\thanks{The author was partially supported by the National
Natural Science Foundation of China (Grant No. 11722109).}

 \subjclass[2010]{57R20, 58C30, 32Q05.}


\dedicatory{Dedicated to Professor Boju Jiang on the occasion of his 85th birthday.}
\keywords{Euler characteristic, signature, Chern number, $\chi_y$-genus, Jiang subgroup, aspherical manifold, non-positive curvature}

\begin{abstract}
By refining an idea of Farrell, we present a sufficient condition in terms of the Jiang subgroup for the vanishing of signature and Hirzebruch's $\chi_y$-genus on compact smooth and K\"{a}hler manifolds respectively. Along this line we show that the $\chi_y$-genus of a non-positively curved compact K\"{a}hler manifold vanishes when the center of its fundamental group is non-trivial, which partially answers a question of Farrell. Moreover, in the latter case all the Chern numbers vanish whenever its complex dimension is no more than $4$, which also provides some evidence to a conjecture proposed by the author and Zheng.
\end{abstract}

\maketitle

\section{Introduction and main results}\label{introduction}
Unless otherwise stated, all smooth and complex manifolds considered in this article are compact and connected, and the dimension of a complex manifold is referred to its \emph{complex} dimension.

Let $X$ be a finite CW-complex and $f$ a self-map of $X$. A homotopy $h_t:X\rightarrow X$ ($0\leq t\leq1$) is called a \emph{cyclic homotopy} of $f$ if $h_0=h_1=f$. In this case the trace $h_t(x_0)$ at any $x_0\in X$ is a loop at $f(x_0)$ and hence represents an element $[h_t(x_0)]\in\pi_1\big(X,f(x_0)\big)$. The trace subgroup of cyclic homotopies $J(f,x_0)\leq \pi_1\big(X,f(x_0)\big)$ is defined by
\be\begin{split}\label{jiangsubgroup}
J(f,x_0)
:=\big\{\xi\in\pi_1\big(X,f(x_0)\big)~\big|~
\text{there exists a cyclic homotopy $h_t$ of $f$ such that $[h_t(x_0)]=\xi$}\big\}.
\end{split}\ee

This subgroup $J(f,x_0)$ was introduced by Boju Jiang in \cite{Ji1} to compute the Nielsen number of self-maps, which is a refined version of the classical Lefschetz number, and is now called \emph{Jiang subgroup} and becomes an indispensable tool in Nielsen fixed point theory. We refer the reader to Jiang's book \cite[\S2.3-\S2.6]{Ji2} for various properties of Jiang subgroup and its applications to Nielsen fixed point theory. The Jiang subgroup $J(f,x_0)$ is indeed a homotopy type invariant and independent of the choice of the basepoint $x_0$ up to an isomorphism which is compatible with $\pi_1(X,x_0)$ (\cite[Lemma 3.9]{Ji2}). Therefore we can simply denote it by $J(f)$.

In this article we are concerned with the Jiang subgroup $J(X):=J(\text{id}_X)$ with respect to the identity map. It turns out that $J(X)\leq\text{Z}\big(\pi_1(X)\big)$, the center of $\pi_1(X)$ (\cite[Lemma 3.7]{Ji2}), and the Euler characteristic $\chi(X)=0$ provided that $J(X)$ is non-trivial (\cite[Prop. 4.12]{Ji2}). The Jiang subgroup $J(X)$ was also investigated in detail by Gottlieb in \cite{Go} and many of Jiang's results concerning $J(X)$ were reproved there. Nevertheless, an important new result obtained by Gottlieb is that $J(X)=\text{Z}\big(\pi_1(X)\big)$ whenever $X$ is \emph{aspherical} (\cite[p. 848]{Go}), i.e., the universal cover of $X$ is contractible.

Combining the above-mentioned facts implies the following useful fact.
\begin{theorem}[Jiang, Gottlieb]\label{JG}
The Euler characteristic of a finite aspherical CW-complex $X$ vanishes whenever $\text{Z}\big(\pi_1(X)\big)$ is nontrivial.
\end{theorem}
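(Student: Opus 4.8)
The plan is to chain together the three facts recalled just above the statement, since the theorem is essentially their synthesis. The standing hypotheses are that $X$ is a finite aspherical CW-complex and that the center $\text{Z}\big(\pi_1(X)\big)$ is nontrivial, and the target conclusion is $\chi(X)=0$. I would organize the argument around the Jiang subgroup $J(X)=J(\text{id}_X)$, using it as the bridge between the hypothesis on the center and the conclusion on the Euler characteristic.

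First I would invoke Gottlieb's identification: because $X$ is aspherical, his result gives the equality $J(X)=\text{Z}\big(\pi_1(X)\big)$, so that the Jiang subgroup of the identity map coincides exactly with the center of the fundamental group. This is the step that genuinely exploits asphericity. In general one only has the inclusion $J(X)\leq\text{Z}\big(\pi_1(X)\big)$ coming from Jiang's lemma, and without contractibility of the universal cover this inclusion may well be strict; it is precisely the aspherical hypothesis that upgrades it to an equality. From this equality together with the assumption that $\text{Z}\big(\pi_1(X)\big)$ is nontrivial, I would immediately conclude that $J(X)$ is nontrivial as well.

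Finally I would apply Jiang's vanishing criterion: once $J(X)$ is known to be nontrivial, the Euler characteristic $\chi(X)$ must vanish, which is the desired conclusion. I do not expect any real obstacle here, since each of the three steps is a direct citation of a previously stated result; the argument is genuinely a two-line deduction. The only points worth checking are bookkeeping ones, namely that Gottlieb's equality is asserted exactly for aspherical $X$ (our hypothesis), and that the finiteness of the CW-complex is what makes $\chi(X)$ well defined and is the setting in which Jiang's proposition is formulated.
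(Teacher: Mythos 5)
Your proposal is correct and follows exactly the paper's own argument: the paper derives this theorem by combining Gottlieb's identification $J(X)=\text{Z}\big(\pi_1(X)\big)$ for aspherical $X$ with Jiang's result that a nontrivial $J(X)$ forces $\chi(X)=0$. Nothing is missing; your remarks about where asphericity and finiteness enter match the paper's use of these hypotheses.
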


Farrell showed in \cite{Fa} that more characteristic numbers of aspherical manifolds vanish if an additional restriction is imposed on the fundamental group. To be more precise, he showed the following theorem (\cite[Thms 1.2, 2.1]{Fa}).

\begin{theorem}[Farrell]\label{Farrelltheorem}
Let $M$ be an aspherical manifold. If $\text{Z}\big(\pi_1(M)\big)$ is nontrivial and $\pi_1(M)$ is residually finite, then the signature $\text{sign}(M)=0$. If moreover $M$ is K\"{a}hler, then the $\chi_y$-genus $\chi_y(M)=0$.
\end{theorem}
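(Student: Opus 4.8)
The plan is to promote the purely homotopy-theoretic consequences of a nontrivial Jiang subgroup into a genuine circle fibration, and then to exploit the multiplicativity of the signature (respectively the $\chi_y$-genus) under finite covers. First I would extract a convenient central element. Since $M$ is aspherical, Gottlieb's theorem gives $J(M)=\text{Z}\big(\pi_1(M)\big)$, which is nontrivial by hypothesis; and because a closed aspherical manifold has torsion-free fundamental group, any nontrivial $\xi\in\text{Z}\big(\pi_1(M)\big)$ automatically has infinite order. The central infinite cyclic subgroup $\langle\xi\rangle$ then yields a central extension $1\to\mathbb{Z}\to\pi_1(M)\to Q\to1$, which, since $M=K\big(\pi_1(M),1\big)$, is realized by a fibration $S^1\to M\xrightarrow{p}X$ with $X=K(Q,1)$ aspherical.

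The signature vanishing would follow immediately if $X$ were a finite-dimensional Poincar\'{e} duality space of formal dimension $4k-1$ (where $\dim M=4k$): the Gysin sequence of an oriented circle bundle then shows that $p^*H^{2k}(X;\mathbb{Q})$ is a Lagrangian for the intersection form on $H^{2k}(M;\mathbb{Q})$, because cup products of pulled-back classes land in $H^{4k}(X;\mathbb{Q})=0$, so $\text{sign}(M)=0$ (equivalently one invokes Chern--Hirzebruch--Serre together with $\text{sign}(S^1)=0$). The difficulty is precisely that $X$ need not have this form: if $Q$ has torsion, then $K(Q,1)$ is infinite-dimensional. This is exactly where residual finiteness enters. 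Using it I would pass to a finite-index subgroup $\Gamma\leq\pi_1(M)$ containing $\xi$ for which $\Gamma/\langle\xi\rangle$ is torsion-free, so that the associated finite cover $\overline{M}\to M$ carries a circle fibration $S^1\to\overline{M}\to\overline{X}$ over a finite aspherical Poincar\'{e} complex $\overline{X}$. The computation above then gives $\text{sign}(\overline{M})=0$, and since the signature is multiplicative under finite covers, $0=\text{sign}(\overline{M})=[\pi_1(M):\Gamma]\cdot\text{sign}(M)$, whence $\text{sign}(M)=0$. I expect this to be the main obstacle: arranging, from residual finiteness alone, a finite cover whose base is genuinely a finite-dimensional duality space, i.e. killing the relevant torsion in $Q$ rather than merely producing a homotopy-theoretic fibration.

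For the K\"{a}hler case I would run the same scheme with the $\chi_y$-genus, which is likewise multiplicative under finite unramified holomorphic covers and in holomorphic fiber bundles, and which specializes to the signature at $y=1$ and to $\chi(M)$ at $y=-1$. The additional work is to upgrade the homotopy-theoretic circle fibration to a holomorphic one whose fibers have vanishing $\chi_y$-genus. Here I would invoke Hodge theory: on a compact K\"{a}hler manifold the central direction feeds into the Albanese map, so that after passing to a suitable finite cover $\overline{M}$ (again supplied by residual finiteness) one obtains a holomorphic fibration whose fibers are positive-dimensional complex tori. Since such a torus $T$ satisfies $\chi_y(T)=(1+y)^{\dim T}(1-1)^{\dim T}=0$, multiplicativity of $\chi_y$ in the fibration forces $\chi_y(\overline{M})=0$, and multiplicativity under the finite cover then gives $\chi_y(M)=0$. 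Beyond the residual-finiteness step shared with the signature case, the principal additional obstacle here is to make the fibration holomorphic and to identify complex-torus fibers, that is, to control the image of $\xi$ in $H_1(M;\mathbb{R})$ and in the Albanese torus.
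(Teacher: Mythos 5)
Your route (central $\mathbb{Z}$ $\Rightarrow$ homotopy circle fibration over $K(Q,1)$ $\Rightarrow$ Gysin/Lagrangian argument $\Rightarrow$ multiplicativity under finite covers) is genuinely different from the paper's, and the Gysin dimension count you sketch is fine \emph{provided} the base is a Poincar\'{e} duality space of formal dimension $4k-1$. But the step you yourself flag as ``the main obstacle'' is a genuine gap, and it is not a technicality: it is essentially the whole content of the theorem. Residual finiteness only says that $\xi$ survives in \emph{some} finite quotient, i.e.\ there is a finite-index normal $H\lhd\pi_1(M)$ with $\xi\notin H$; it does not produce a finite-index $\Gamma\ni\xi$ with $\Gamma/\langle\xi\rangle$ torsion-free. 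Since $\pi_1(M)$ is torsion-free and $\xi$ is central, torsion in $\Gamma/\langle\xi\rangle$ corresponds exactly to proper roots of $\xi$ lying in $\Gamma$ (if $g^n=\xi^k$ nontrivially, then $\langle g,\xi\rangle\cong\mathbb{Z}$), and a central element can have \emph{infinitely many} proper roots: in the Klein bottle group $\langle a,b\mid bab^{-1}=a^{-1}\rangle$, every element $a^kb$ is a square root of the central element $b^2$. So you must separate $\xi$ from an a priori infinite family of roots by a \emph{single} finite-index subgroup, which is a uniform statement that residual finiteness does not supply. Moreover, even granted torsion-freeness of $Q'=\Gamma/\langle\xi\rangle$, the assertion that $K(Q',1)$ is a finite-dimensional duality space of dimension $4k-1$ is a Bieri-type theorem requiring FP/finite-cohomological-dimension hypotheses (torsion-free finitely presented groups can have infinite cohomological dimension, e.g.\ Thompson's group $F$), so that step is also unproven. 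The K\"{a}hler half is more speculative still: a central element need not survive in $H_1(M;\mathbb{R})$ (it may lie in the commutator subgroup), the Albanese map is in general not a fiber bundle with complex-torus fibers, and $\chi_y$ is not multiplicative in arbitrary fibrations without control of the monodromy/structure group.

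For contrast, the paper (following Farrell; Theorem 1.2 is recovered as a special case of Theorem 1.3) uses residual finiteness only in its weakest form. Pick one finite-index normal $H$ with $\xi\notin H$. Asphericity and Gottlieb's theorem give $\xi\in J(M)$, so $\xi$ is the trace of a cyclic homotopy $h_t$ of $\mathrm{id}_M$; lifting $h_t$ to the finite cover associated with $H$ yields a deck transformation $\tau=\widetilde{h}_1$ of finite order, homotopic to the identity, and fixed-point free (the lifted path of $h_t(x_0)$ is not a loop precisely because $\xi\notin H$). The Atiyah--Singer $G$-signature theorem then gives $\mathrm{sign}(\tau,\widetilde{M})=0$ since there are no fixed points, while homotopy invariance of de Rham cohomology (resp.\ of Dolbeault cohomology via Hodge theory, in the K\"{a}hler case) gives $\mathrm{sign}(\tau,\widetilde{M})=\mathrm{sign}(\widetilde{M})$ (resp.\ $\chi^p(\tau,\widetilde{M})=\chi^p(\widetilde{M})$), and multiplicativity under the finite cover finishes. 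This route requires no structure theory whatsoever for $\pi_1(M)/\langle\xi\rangle$, which is exactly why it works in full generality; to salvage your approach you would need to prove the root-separation and duality-of-quotient statements above, which are open-ended group-theoretic problems rather than consequences of the stated hypotheses.
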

More details about signature and the $\chi_y$-genus can be found in Section \ref{preliminaries}. Recall that a group $G$ is called \emph{residually finite} if for every $g\in G\backslash\{1\}$, there exists a homomorphism from $G$ to a finite group such that the image of $g$ is not the identity (\cite[\S 2]{CC}). It is now well-known that the fundamental groups of closed surfaces, 3-manifolds and hyperbolic manifolds are residually finite.

By refining the proof of Farrell in \cite{Fa}, our first result is the following theorem.
\begin{theorem}\label{maintheorem}
Let $M$ be a smooth manifold. If the Jiang subgroup $J(M)$ satisfies
\be\label{Jiang condition}J(M)\not\subset\bigcap_{\overset{H\lhd \pi_1(M)}{[\pi_1(M):H]<\infty}}H,\ee
i.e., $J(M)$ is not contained in the intersection of all normal subgroups of finite index of $\pi_1(M)$,
then the signature $\text{sign}(M)=0$. If moreover $M$ is K\"{a}hler, then the $\chi_y$-genus $\chi_y(M)=0$.
\end{theorem}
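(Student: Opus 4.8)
The plan is to use the hypothesis \eqref{Jiang condition} to manufacture, on a suitable finite cover of $M$, a free action of a nontrivial finite cyclic group whose generator is homotopic to the identity, and then to run this through equivariant fixed-point formulas. First I would choose $\xi\in J(M)$ witnessing \eqref{Jiang condition}, i.e. $\xi\notin H$ for some normal subgroup $H\lhd\pi_1(M)$ of finite index. Let $p\colon\widehat M\to M$ be the regular covering associated with $H$, so that the finite group $Q:=\pi_1(M)/H$ acts on $\widehat M$ by deck transformations. Since $\xi$ is central (recall $J(M)\le\text{Z}(\pi_1(M))$) and $\xi\notin H$, its image $\bar\xi\in Q$ is a nontrivial element of some finite order $m>1$, and the corresponding deck transformation $\tau$ generates a free $\mathbb{Z}/m$-action on $\widehat M$.

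The crux is to show that $\tau$ is homotopic to $\text{id}_{\widehat M}$. For this I would take a cyclic homotopy $h_t$ of $\text{id}_M$ realizing $\xi$, form the self-homotopy $G(t,x):=h_t(p(x))$ of $p$, and lift it to $\bar G\colon[0,1]\times\widehat M\to\widehat M$ with $\bar G(0,\cdot)=\text{id}_{\widehat M}$ via the homotopy lifting property. Because $G(1,\cdot)=p$, the endpoint $\bar G(1,\cdot)$ is again a lift of $p$ and hence a deck transformation; tracing the basepoint along $t\mapsto\bar G(t,\hat x_0)$ — which covers the trace loop representing $\xi$ — identifies this deck transformation as precisely $\tau$. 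Thus $\text{id}_{\widehat M}=\bar G(0,\cdot)\simeq\bar G(1,\cdot)=\tau$. In particular $\tau$ is orientation-preserving and acts as the identity on $H^\ast(\widehat M)$; and if $M$ is K\"{a}hler then $\widehat M$ is K\"{a}hler, $\tau$ is a biholomorphism, and---since a holomorphic map preserving the Hodge decomposition that is the identity on $H^k$ must be the identity on each summand---$\tau^\ast=\text{id}$ on every $H^q(\widehat M,\Omega^p)$.

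With $\tau$ in hand the two conclusions follow the same template. For the signature, $\tau$ is a finite-order orientation-preserving diffeomorphism acting freely, so its fixed-point set is empty and the Atiyah--Singer $G$-signature theorem forces the equivariant signature $\text{sign}(\tau,\widehat M)$ to vanish; on the other hand $\tau^\ast=\text{id}$ on cohomology gives $\text{sign}(\tau,\widehat M)=\text{sign}(\widehat M)$, whence $\text{sign}(\widehat M)=0$. Multiplicativity of the signature under the finite cover $p$ then yields $[\pi_1(M):H]\cdot\text{sign}(M)=0$, so $\text{sign}(M)=0$. In the K\"{a}hler case I would replace the $G$-signature theorem by the holomorphic Lefschetz fixed-point formula, assembled over $p$ into the equivariant $\chi_y$-genus $\chi_y(\tau,\widehat M)=\sum_p y^p\sum_q(-1)^q\,\text{tr}\big(\tau^\ast\mid H^q(\widehat M,\Omega^p)\big)$: freeness again kills all fixed-point contributions, while $\tau^\ast=\text{id}$ on each $H^q(\widehat M,\Omega^p)$ makes $\chi_y(\tau,\widehat M)=\chi_y(\widehat M)$; multiplicativity (the $\chi_y$-genus being a combination of Chern numbers) then gives $\chi_y(M)=0$.

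The main obstacle I expect is the equivariant fixed-point step rather than the topology: one must invoke the $G$-signature theorem and the holomorphic Lefschetz formula in the precise form where a fixed-point-free finite-order (bi)holomorphic map contributes nothing, and simultaneously verify that being homotopic to the identity upgrades the equivariant invariants to the ordinary ones---this is what lets the smoothness ($\tau$ a diffeomorphism) and the K\"{a}hler structure ($\tau$ a biholomorphism preserving the groups $H^q(\widehat M,\Omega^p)$) do their work. The homotopy-lifting identification of $\tau$ is the other point demanding care, since it is exactly where the Jiang subgroup hypothesis \eqref{Jiang condition} enters, allowing one to dispense with both the asphericity and the residual finiteness assumed in Theorem \ref{Farrelltheorem}.
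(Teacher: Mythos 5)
Your proposal is correct and follows essentially the same route as the paper: pass to the finite regular cover determined by a normal subgroup $H$ of finite index missing an element of $J(M)$, lift the cyclic homotopy to identify a nontrivial (hence fixed-point-free) finite-order deck transformation $\tau$ homotopic to the identity, then combine homotopy invariance of (Dolbeault/de Rham) cohomology with the Atiyah--Singer $G$-signature theorem and the holomorphic Lefschetz fixed-point formula to kill the invariants on the cover, and descend by multiplicativity. The only cosmetic difference is that you invoke centrality of $\xi$ to locate $\tau$ as the deck transformation of $\bar\xi\in\pi_1(M)/H$, whereas the paper only needs normality of $H$ (transitivity of the deck action on fibers) plus the unique lifting property for the same identification.
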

\begin{remark}\label{remark}
\begin{enumerate}
\item
It turns out that $\chi_y(M)\big|_{y=-1}=\chi(M)$ and $\chi_y(M)\big|_{y=1}=\text{sign}(M)$ (see Section \ref{preliminaries} for more details). Therefore the vanishing of $\chi_y(M)$ contains more information than those of the Euler characteristic and the signature.

\item
One may wonder, under the same hypothesis as in Theorem \ref{maintheorem} on \emph{spin} manifolds, if we can deduce the vanishing of the $\hat{A}$-genus. However, the strategy of the proof in treating the signature and the $\chi_y$-genus can \emph{not} be carried over to the case of the $\hat{A}$-genus on spin manifolds, which will be clear in Section \ref{proof of maintheorem} (see Remark \ref{remark2}).
\end{enumerate}
\end{remark}

In order to present some useful sufficient conditions to the hypothesis condition (\ref{Jiang condition}), let us recall two related facts in group theory. The residual finiteness of a group $G$ has several equivalent definitions, one of which is that the intersection of all normal subgroups of finite index is trivial (\cite[Prop. 2.1.11]{CC}):
\be\label{residualfiniteness}\bigcap_{\overset{H\lhd G}{[G:H]<\infty}}H=\{1\}.\ee
Another fact is the so-called Poincar\'{e} Theorem in group theory, which states that any subgroup of finite index in $G$ contains a normal subgroup of $G$ which is also of finite index (\cite[Lemma 2.1.10]{CC}). This latter fact, together with (\ref{residualfiniteness}), also implies that the residual finiteness has another equivalent characterization: the intersection of all subgroups of finite index is trivial.

With these facts in mind,
The hypothesis condition (\ref{Jiang condition}) becomes simpler under two extreme cases: the right-hand side (RHS for short) of (\ref{Jiang condition}) is trivial or its LHS is the whole $\pi_1(M)$. When the RHS of (\ref{Jiang condition}) is trivial, i.e., $\pi_1(M)$ is residually finite, the hypothesis (\ref{Jiang condition}) means that $J(M)$ is non-trivial. When $J(M)=\pi_1(M)$, the hypothesis (\ref{Jiang condition}) means that $\pi_1(M)$ contains a proper subgroup of finite index. The hypothesis $J(M)=\pi_1(M)$ is particularly useful in computing the Nielsen number (\cite[Thm 4.2]{Ji2}) and two family of spaces satisfy it: (the homotopy types of ) $H$-spaces and homogeneous spaces (\cite[Thm 3.11]{Ji2}).

We summarize the discussions above into the following consequence.
\begin{corollary}
The conclusions in Theorem \ref{maintheorem} hold true as long as one of the following two situations occurs.
\begin{enumerate}
\item
The Jiang subgroup $J(M)$ is non-trivial and $\pi_1(M)$ is residually finite. In particular for aspherical manifolds this is Farrell's Theorem \ref{Farrelltheorem} as in this case $J(M)=\text{Z}\big(\pi_1(M)\big)$ by Theorem \ref{JG}.

\item
$J(M)=\pi_1(M)$ and contains a proper subgroup of finite index. In particular, the former holds when $M$ has the homotopy type of an $H$-space or a homogeneous space, and the latter holds when $\pi_1(M)$ is a non-trivial finite group.
\end{enumerate}
\end{corollary}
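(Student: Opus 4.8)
The plan is to verify, in each of the two listed situations, that the hypothesis condition (\ref{Jiang condition}) of Theorem \ref{maintheorem} holds, and then to invoke that theorem directly. Throughout, write $N:=\bigcap_{H\lhd\pi_1(M),\,[\pi_1(M):H]<\infty}H$ for the right-hand side of (\ref{Jiang condition}), so that in both cases the task reduces to showing $J(M)\not\subset N$.

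For situation (1), I would use the equivalent characterization (\ref{residualfiniteness}) of residual finiteness: since $\pi_1(M)$ is residually finite, $N=\{1\}$. Because $J(M)$ is non-trivial, it contains some element $\xi\neq 1$, which does not lie in $N=\{1\}$; hence $J(M)\not\subset N$, and Theorem \ref{maintheorem} applies. For the aspherical special case, I would recall Gottlieb's identification quoted in Theorem \ref{JG}, namely $J(M)=\text{Z}\big(\pi_1(M)\big)$, so that the present hypotheses reduce \emph{exactly} to those of Farrell's Theorem \ref{Farrelltheorem}, thereby recovering it.

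For situation (2), the first step is to observe that the assumption $J(M)=\pi_1(M)$ turns the desired failure of inclusion into the statement $N\subsetneq\pi_1(M)$, i.e.\ that $\pi_1(M)$ possesses at least one proper normal subgroup of finite index. To produce one, I would start from a proper subgroup $K<\pi_1(M)$ of finite index and apply the Poincar\'e Theorem quoted above: $K$ contains a subgroup $H\lhd\pi_1(M)$ that is normal and of finite index. Since $H\leq K\subsetneq\pi_1(M)$, this $H$ is proper, whence $N\leq H\subsetneq\pi_1(M)$ and therefore $J(M)=\pi_1(M)\not\subset N$. Theorem \ref{maintheorem} again finishes the argument. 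The two ``in particular'' assertions then follow by citing \cite[Thm 3.11]{Ji2} for the equality $J(M)=\pi_1(M)$ on (homotopy types of) $H$-spaces and homogeneous spaces, and by noting that a non-trivial finite group $\pi_1(M)$ has the trivial subgroup $\{1\}$ as a proper subgroup of finite index.

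Since the genuine analytic and topological content has already been absorbed into Theorem \ref{maintheorem}, there is no serious obstacle here; the proof is purely a matter of translating each situation into the group-theoretic condition (\ref{Jiang condition}). The only point demanding slight care is the properness in situation (2): one must remember to pass from a proper finite-index subgroup to a proper finite-index \emph{normal} subgroup via Poincar\'e's Theorem, and to check that both normality and properness survive this passage. I expect this elementary bookkeeping to be the main (if minor) obstacle.
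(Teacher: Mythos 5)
Your proof is correct and follows essentially the same route as the paper, which derives the corollary by observing that under residual finiteness the right-hand side of (\ref{Jiang condition}) is trivial \big(via (\ref{residualfiniteness})\big), and that when $J(M)=\pi_1(M)$ the condition reduces, via Poincar\'e's Theorem, to the existence of a proper finite-index subgroup. Your extra care about properness and normality surviving the Poincar\'e reduction is exactly the bookkeeping the paper leaves implicit.
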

Recall that a non-positive sectional curvature (\emph{non-positively curved} for short) Riemannian manifold is aspherical, thanks to the Cartan-Hadamard Theorem (\cite[p. 59]{Zhe}). Such a condition is usually regarded as the differential-geometry counterpart to that of asphericity. Farrell conjectured that the condition of $\pi_1(M)$ being residually finite in Theorem \ref{Farrelltheorem} may be superfluous (\cite[p. 165]{Fa}). Our second result is to partially answer Farrell's question in the K\"{a}hler setting, but with non-positive sectional curvature in place of asphericity.

The notions of K\"{a}hler hyperbolicity and K\"{a}hler non-ellipticity were introduced by Gromov et al (\cite{Gr}, \cite{CX}, \cite{JZ}) to attack the well-known Singer Conjecture (\cite[\S11]{Lu}), which include negatively and non-positively curved K\"{a}hler manifolds respectively. More details can be found in Section \ref{proof of vanish1}. Our second result is the following theorem.
\begin{theorem}\label{vanish1}
Let $M$ be a K\"{a}hler non-elliptic manifold. Then the Euler characteristic of $M$ vanishes if and only if $\chi_y(M)=0$. In particular, a non-positively curved K\"{a}hler manifold $M$ with non-trivial $\text{Z}\big(\pi_1(M)\big)$ has vanishing $\chi_y$-genus: $\chi_y(M)=0$.
\end{theorem}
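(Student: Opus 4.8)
The plan is to reduce both assertions to one computation with $L^2$-Hodge numbers. Write $n$ for the complex dimension of $M$ and recall that Hirzebruch's genus is the polynomial $\chi_y(M)=\sum_{p=0}^n\chi^p(M)\,y^p$ with $\chi^p(M)=\sum_{q=0}^n(-1)^qh^{p,q}(M)$, and that $\chi_y(M)\big|_{y=-1}=\chi(M)$. The reverse implication of the stated equivalence is then immediate and uses no hypothesis: if $\chi_y(M)\equiv0$ then $\chi(M)=\chi_y(M)\big|_{y=-1}=0$. All the content lies in the forward implication $\chi(M)=0\Rightarrow\chi_y(M)\equiv0$.

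For this I would pass to the universal cover $\widetilde M$, equipped with the pulled-back (complete) K\"{a}hler metric and the deck action of $\Gamma=\pi_1(M)$, and work with the reduced $L^2$-Dolbeault cohomology. Denote by $h^{p,q}_{(2)}(M)=\dim_\Gamma\mathcal H^{p,q}_{(2)}(\widetilde M)$ the $L^2$-Hodge numbers, i.e.\ the von Neumann dimensions of the spaces of $L^2$ harmonic $(p,q)$-forms on $\widetilde M$; these are non-negative. The two inputs I need are:
\begin{enumerate}
\item Atiyah's $L^2$-index theorem applied to the Dolbeault complex twisted by $\Omega^p$, identifying the holomorphic Euler characteristic with its $L^2$ counterpart, $\chi^p(M)=\sum_{q=0}^n(-1)^qh^{p,q}_{(2)}(M)$ for each $p$;
\item the concentration of reduced $L^2$-cohomology in the middle degree on a K\"{a}hler non-elliptic manifold. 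Since the K\"{a}hler identities hold on the complete manifold $\widetilde M$, one has the $L^2$ Hodge decomposition $\mathcal H^{k}_{(2)}(\widetilde M)=\bigoplus_{p+q=k}\mathcal H^{p,q}_{(2)}(\widetilde M)$, so this concentration is equivalent to $h^{p,q}_{(2)}(M)=0$ whenever $p+q\neq n$.
\end{enumerate}

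Granting these, the remaining computation is short. Inputs (1) and (2) together leave only the term $q=n-p$, so $\chi^p(M)=(-1)^{n-p}h^{p,n-p}_{(2)}(M)$ and hence
\be\chi_y(M)=\sum_{p=0}^n(-1)^{n-p}h^{p,n-p}_{(2)}(M)\,y^p.\ee
Setting $y=-1$ gives $(-1)^n\chi(M)=\sum_{p=0}^nh^{p,n-p}_{(2)}(M)\geq0$, a sum of non-negative terms. Thus $\chi(M)=0$ forces every $h^{p,n-p}_{(2)}(M)$ to vanish, whence all coefficients of the displayed polynomial vanish and $\chi_y(M)\equiv0$, which proves the equivalence. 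For the ``in particular'' clause, a non-positively curved compact K\"{a}hler manifold is aspherical by the Cartan--Hadamard theorem and lies in the K\"{a}hler non-elliptic class; since $\text{Z}\big(\pi_1(M)\big)$ is non-trivial, Theorem \ref{JG} gives $\chi(M)=0$, and the equivalence just established yields $\chi_y(M)=0$.

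The main obstacle is input (2): the vanishing of reduced $L^2$-cohomology outside the middle degree. This is the $L^2$ analogue of Gromov's vanishing theorem for K\"{a}hler hyperbolic manifolds, and it is precisely the point where the non-elliptic hypothesis (rather than bare asphericity) enters, resting on a Bochner--Kodaira-type estimate on $\widetilde M$ together with the growth control of the lifted K\"{a}hler form built into the definition of non-ellipticity. As this concentration is available in the literature on K\"{a}hler hyperbolicity and non-ellipticity (\cite{Gr}, \cite{CX}, \cite{JZ}), I would cite it, and spend the remaining care on verifying that non-positive curvature does place $M$ in the non-elliptic class and that Atiyah's theorem applies verbatim to the twisted Dolbeault operators.
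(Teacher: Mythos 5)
Your proposal is correct and follows essentially the same route as the paper: Atiyah's $L^2$-index theorem to write $\chi^p(M)=\sum_q(-1)^qh^{p,q}_{(2)}(M)$, the Gromov/Cao--Xavier/Jost--Zuo concentration of $L^2$-cohomology in middle degree together with the $L^2$-Hodge decomposition, the positivity argument $(-1)^n\chi(M)=\sum_p h^{p,n-p}_{(2)}(M)\geq 0$, and Cartan--Hadamard plus the Jiang--Gottlieb theorem for the ``in particular'' clause. The only cosmetic difference is that the paper isolates the key step as a standalone proposition (valid for any K\"ahler manifold whose $L^2$-Betti numbers vanish outside middle degree), whereas you fold it directly into the theorem.
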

Indeed we shall show in Proposition \ref{vanish1-prop} that the vanishing of the Euler characteristic is equivalent to that of the $\chi_y$-genus on those manifolds satisfying the Singer Conjecture, from which Theorem \ref{vanish1} follows. The conclusions in Theorem \ref{vanish1} can be further strengthened whenever the dimension is small. This is the following result.
\begin{theorem}\label{vanish2}
Let $M$ be an $n$-dimensional non-positively curved K\"{a}hler manifold with non-trivial $\text{Z}\big(\pi_1(M)\big)$. Then all the Chern numbers of $M$ vanish when the dimension $n\leq4$.
\end{theorem}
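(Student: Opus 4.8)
The plan is to use non-positive curvature to replace the homotopy-theoretic hypothesis of Theorem~\ref{maintheorem} by an honest \emph{parallel} holomorphic vector field, and then to show that the full tangent bundle is pulled back from a space of real dimension $2n-1$, so that every Chern number vanishes for purely dimensional reasons. The point is that the cohomological vanishing of Theorem~\ref{vanish1} alone is not enough: via Hirzebruch--Riemann--Roch, $\chi_y(M)=0$ only pins down the Hodge--Hirzebruch numbers $\chi^p(M)$, which by Serre duality give $\lceil(n+1)/2\rceil$ linear relations among the Chern numbers, leaving monomials such as $c_1^{\,n}$ completely undetermined once $n\ge3$.

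First I would manufacture a parallel field from the center. Since $M$ is non-positively curved it is aspherical, so $\pi_1(M)$ is torsion-free and the nontrivial center contains an element $\gamma$ of infinite order. I would check that $\gamma$ acts on the Hadamard cover $\tilde M$ as a Clifford translation: its displacement function is convex and invariant under the centralizer $\pi_1(M)$, so its minimum set is a $\pi_1(M)$-invariant closed convex subset of $\tilde M$, and cocompactness forces it to be all of $\tilde M$; hence the displacement is constant. The unit field $V$ along the translation is parallel, and $\delta\gamma\delta^{-1}=\gamma$ forces $\delta_*V=V$ for every deck transformation $\delta$, so $V$ descends to a parallel field on $M$. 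As $J$ is parallel, $JV$ is parallel too, so $V^{1,0}=\tfrac12(V-iJV)$ is a nowhere-zero parallel—hence holomorphic—section of $T^{1,0}M$, and the orthogonal splitting $T^{1,0}M=\underline{\mathbb C}\,V^{1,0}\oplus Q$ is holomorphic with $\operatorname{rk}Q=n-1$, whence $c(TM)=c(Q)$.

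Next I would integrate $V$. A parallel field is Killing and preserves $J$, so its flow acts by holomorphic isometries; the time-$L$ map equals $\gamma$ (for $L$ the translation length), so the flow is periodic, and since $V$ is nowhere zero it has no fixed points, giving a \emph{locally free} holomorphic $S^1$-action. The quotient $W:=M/S^1$ is then a compact orbifold with $\dim_{\mathbb R}W=2n-1$, and $\rho\colon M\to W$ is an orbifold principal circle bundle. Being an $S^1$-invariant subbundle of $T^{1,0}M$, the bundle $Q$ descends to an orbibundle $\bar Q$ on $W$ with $Q=\rho^*\bar Q$, so $c(TM)=c(Q)=\rho^*c(\bar Q)$ in $H^*(M;\mathbb Q)$. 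Every Chern number of $M$ is therefore the integral over $M$ of $\rho^*$ of a degree-$2n$ class on $W$; since $H^{2n}(W;\mathbb Q)=0$, all Chern numbers vanish. This gives the theorem for $n\le4$—indeed in every dimension, consistent with the conjecture of the author and Zheng.

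I expect the crux to be the first step: upgrading the central $\mathbb Z$ to a \emph{parallel}, not merely holomorphic, vector field. This is exactly where non-positive \emph{sectional} curvature is indispensable, through the Clifford-translation property, and it is what lets one drop the residual-finiteness hypothesis of Theorem~\ref{Farrelltheorem}: parallelism produces the locally free $S^1$-action directly, bypassing the finite-cover arguments entirely. Everything afterward is formal—the holomorphic splitting, the descent of $Q$, and the dimension count—so the only real work is verifying the Clifford-translation property and the local freeness, both of which follow from the Hadamard geometry of $\tilde M$ together with torsion-freeness of $\pi_1(M)$.
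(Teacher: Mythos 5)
Your proposal is correct in outline, but it takes a genuinely different route from the paper's. The paper stays entirely inside the characteristic-number framework: by \cite[Coro.~3.5]{LZ} the signed Chern numbers of such an $M$ are squeezed between $0$ and $(-c_1)^n$, so everything reduces to showing $(-c_1)^n=0$; assuming $(-c_1)^n>0$, nefness of $T_M^{\ast}$ plus bigness of $K_M$ gives $K_M$ ample, and then Miyaoka--Yau together with Qi Zhang's low-dimensional expressions for the Todd genus (this is exactly where $n\leq4$ enters) forces $(-1)^n\chi^0(M)>0$, hence $\chi_y(M)\neq0$, hence $\chi(M)\neq0$ by Proposition \ref{vanish1-prop}, contradicting Theorem \ref{JG}. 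You instead exploit non-positive \emph{sectional} curvature directly through Eberlein--Lawson--Yau structure theory: a central element is a Clifford translation (its displacement function is convex, $\pi_1$-invariant, hence constant), its direction field is parallel (this step, which you assert without argument, is the flat strip theorem applied to a geodesic $c$ and $\gamma\circ c$, which stay at constant distance $L$), Kählerness converts it into a nowhere-zero parallel holomorphic section of $T^{1,0}M$, and its flow descends to a locally free holomorphic $S^1$-action because the time-$L$ map of the lifted flow is precisely $\gamma$. Your route buys strictly more: it proves vanishing of all Chern numbers in \emph{every} dimension, removing the $n\leq4$ hypothesis, and it bypasses \cite{LZ}, Miyaoka--Yau, Zhang's theorem and the $L^2$-Hodge theory altogether; what it does not recover is the paper's Corollary \ref{vanish3} (positivity under quasi-negative Ricci), which is obtained from the same algebro-geometric machinery and which your construction in fact shows is vacuous in the presence of a nontrivial center, since a parallel field forces $\mathrm{Ric}(V,V)\equiv0$. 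One simplification: the orbifold quotient and descent of $Q$, the least rigorous part of your sketch, can be avoided entirely. Since $V$ is parallel, $R(X,Y)V=0$, and the symmetries of the Kähler curvature tensor then give $\iota_VR=0$; every Chern--Weil form is a polynomial in the entries of $R$, so every top-degree Chern form is annihilated by $\iota_V$ and hence vanishes pointwise, which kills all Chern numbers without ever forming $M/S^1$.
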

In \cite{LZ} the author and F.Y. Zheng obtained some Chern class/number inequalities on numerically effective holomorphic vector bundles and gave some related applications to non-negative and non-positive curvature manifolds in various senses (\cite[\S 3]{LZ}). Motivated by these results, we proposed the following conjecture (\cite[\S4]{LZ}), which can be regarded as the complex analogue to the classical Hopf conjecture.
\begin{conjecture}[Li-Zheng]\label{question}
Let $M$ be an $n$-dimensional K\"{a}hler manifold
with non-positive holomorphic bisectional curvature whose Ricci curvature is quasi-negative. Then the signed Euler characteristic  $(-1)^n\chi(M)>0$.
\end{conjecture}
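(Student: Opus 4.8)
The plan is to reduce the signed Euler characteristic to a Chern number of the holomorphic cotangent bundle and then to read off its sign from the curvature hypotheses. Since $c_n(E^*)=(-1)^nc_n(E)$ for a rank-$n$ bundle $E$, the Gauss--Bonnet--Chern theorem gives
\be
(-1)^n\chi(M)=(-1)^nc_n(T^{1,0}M)[M]=c_n(\Omega^1_M)[M],
\ee
so it suffices to prove $c_n(\Omega^1_M)[M]>0$. The first observation is that both hypotheses translate cleanly into positivity statements for $\Omega^1_M$. The Griffiths curvature of $T^{1,0}M$ evaluated on a tangent direction $\xi$ and a bundle direction $v$ equals the holomorphic bisectional curvature $R(\xi,\bar\xi,v,\bar v)$, so non-positive holomorphic bisectional curvature says precisely that $T^{1,0}M$ is Griffiths seminegative, equivalently that $\Omega^1_M$ is Griffiths semipositive, and hence nef. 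Moreover the trace of the Chern curvature of $\Omega^1_M$ is $-\mathrm{Ric}$, so the quasi-negative Ricci hypothesis says this trace is a semipositive $(1,1)$-form everywhere which is positive definite at some point $p$.

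First I would establish the non-strict inequality $c_n(\Omega^1_M)[M]\geq 0$. This follows either from the Fulton--Lazarsfeld theory of numerically positive polynomials for nef bundles (the top Chern class being one of the Schur polynomials), or---more usefully for the strict case---from the pointwise statement that the top Chern form $c_n(\Omega^1_M)=\det\big(\tfrac{i}{2\pi}\Theta\big)$ of a Griffiths semipositive bundle is a non-negative multiple of the volume form. I would represent $c_n$ by this determinant of the Chern-connection curvature $\Theta$ and verify the sign in a unitary frame.

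For the crucial strict inequality I would argue locally at the point $p$ where $-\mathrm{Ric}$ is positive definite. The key lemma I would aim to prove is: \emph{if a Hermitian holomorphic bundle of rank $n$ is Griffiths semipositive and its Chern curvature has positive-definite trace at a point, then its top Chern form is strictly positive there.} Granting this, the integrand $c_n(\Omega^1_M)$ is $\geq 0$ everywhere and $>0$ on a neighbourhood of $p$, whence $c_n(\Omega^1_M)[M]>0$. The quasi-negative hypothesis enters exactly here: it rules out degenerate product-type examples such as $C\times E$ (a genus $\geq 2$ curve times an elliptic curve), for which $\Omega^1$ is nef and $\det\Omega^1=K$ is nef yet $c_n=0$; these are precisely the manifolds whose Ricci form always has a flat direction, and so are not quasi-negative.

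The main obstacle is this strict pointwise positivity lemma. Griffiths semipositivity is strictly weaker than Nakano semipositivity, and the pointwise positivity of Chern forms under Griffiths positivity is the content of a delicate conjecture of Griffiths, for which only partial results (notably for Segre forms) are known; upgrading ``trace positive definite at $p$'' to ``top Chern form positive at $p$'' while retaining only Griffiths semipositivity in the transverse directions is the genuinely hard analytic step, and is the reason the statement remains a conjecture. A complementary route, closer in spirit to Theorem \ref{vanish1}, would go through Atiyah's $L^2$-index theorem: if one knew for a K\"ahler non-elliptic manifold that the reduced $L^2$-cohomology of the universal cover is concentrated in the middle degree (the Singer Conjecture) and is non-zero under the quasi-negative assumption---for instance because $K_M$ becomes nef and big, forcing non-zero $L^2$ holomorphic $n$-forms on the cover---then $(-1)^n\chi(M)=\dim_\Gamma \bar H^n_{(2)}>0$. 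Either way, the decisive difficulty is supplying the strictness input, which the present techniques do not yet provide.
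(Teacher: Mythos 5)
You are attempting a statement that the paper itself poses only as a conjecture (the complex analogue of the Hopf conjecture), and your attempt, as you candidly acknowledge, does not close it. The gap is exactly where you locate it: your key lemma --- that a Griffiths semipositive Hermitian holomorphic bundle of rank $n$ over an $n$-fold whose Chern curvature has positive-definite trace at a point has strictly positive top Chern form there --- is not a theorem but an open problem in the circle of the Griffiths conjecture. Worse, even the \emph{non-strict} pointwise statement you offer as an alternative to Fulton--Lazarsfeld (that $\det\big(\tfrac{i}{2\pi}\Theta\big)$ of a Griffiths semipositive bundle is a nonnegative multiple of the volume form) is not known under Griffiths semipositivity alone: pointwise nonnegativity of Chern/Schur forms is currently available under (dual) Nakano-type hypotheses and for Segre forms, and Griffiths semipositivity --- which is precisely what non-positive bisectional curvature gives for $\Omega^1_M$ --- only controls the quantities $\Theta_{j\bar k}(v,\bar v)e^j\bar e^k$ on decomposable data; the off-diagonal terms entering the fiberwise determinant are unconstrained, so the matrix of $(1,1)$-forms does not reduce to a mixed discriminant of PSD matrices as a unitary-frame computation would require. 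Your cohomological reduction $(-1)^n\chi(M)=c_n(\Omega^1_M)[M]$, the nonnegativity via nefness, and the observation that quasi-negativity is what excludes split examples like $C\times E$ are all sound; the strictness step is the genuine missing idea, and if your lemma were known it would in fact settle the full conjecture in all dimensions, well beyond anything the paper achieves.

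For comparison, the paper proves no general case of Conjecture \ref{question}; its evidence is Corollary \ref{vanish3}, the case $n\leq 4$ under the \emph{stronger} hypothesis of non-positive sectional curvature, and its route is cohomological rather than pointwise. Nefness of $T^{\ast}_M$ bounds all signed Chern numbers between $0$ and $(-c_1)^n$ (\cite[Coro. 3.5]{LZ}); quasi-negative Ricci curvature forces $(-c_1)^n>0$; Qi Zhang's Theorem \ref{vanish2-thm} (projectivity and ampleness of $K_M$ via absence of rational curves, then the Miyaoka--Yau inequality fed into Hirzebruch's closed formulas for $\chi^0$ in dimensions $\leq 4$) yields $(-1)^n\chi^0(M)>0$; and Proposition \ref{vanish1-prop}, resting on the Gromov/Cao--Xavier/Jost--Zuo middle-degree concentration of $L^2$-cohomology together with Atiyah's $L^2$-index theorem, converts the nonvanishing Todd genus into nonvanishing (indeed correctly signed) Euler characteristic. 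Your closing ``complementary route'' through $L^2$-index theory is essentially this strategy, so your diagnosis of where strictness must enter matches the paper's; what you miss is that the Todd genus plus Miyaoka--Yau can substitute for the unavailable pointwise positivity in low dimensions --- which is exactly why the paper's result stops at $n\leq 4$ and the conjecture remains open in general.
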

Conjecture \ref{question} is true when $n=2$ (\cite[Prop. 4.4]{LZ}). It is well-known that the sign of holomorphic bisectional curvature is dominated by that of Riemannian sectional curvature (\cite[p. 178]{Zhe}). We shall explain in Section \ref{proof of vanish2} that our proof of Theorem \ref{vanish2} indeed leads to the following corollary, which provides some more evidences to Conjecture \ref{question}.
\begin{corollary}\label{vanish3}
Conjecture \ref{question} is true for $n$-dimensional non-positively curved K\"{a}hler manifolds whose Ricci curvature is quasi-negative when $n\leq4$.
\end{corollary}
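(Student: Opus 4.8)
The plan is to rewrite the signed Euler characteristic as a Chern number of the cotangent bundle and then to read off its sign from the two positivity properties forced by the hypotheses. The starting identity is $(-1)^n\chi(M)=\int_Mc_n(T^*M)$, which is immediate from $c_k(T^*M)=(-1)^kc_k(TM)$ together with $\chi(M)=\int_Mc_n(TM)$; thus the whole statement reduces to proving $\int_Mc_n(T^*M)>0$.

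Next I would isolate the relevant positivity. Non-positive holomorphic bisectional curvature makes $TM$ Griffiths seminegative, hence $T^*M=\Omega^1_M$ is Griffiths semipositive and in particular numerically effective, which is exactly the hypothesis under which the Chern number inequalities of \cite{LZ} are available. The Ricci form represents $-c_1(T^*M)$, so quasi-negativity of the Ricci curvature says precisely that $c_1(T^*M)=c_1(K_M)$ is represented by a non-negative $(1,1)$-form that is strictly positive on a non-empty open set; consequently $K_M$ is nef and $\int_Mc_1(T^*M)^n>0$, i.e. $K_M$ is big. In particular the universal cover splits off no flat factor, so $\text{Z}\big(\pi_1(M)\big)$ is trivial and we are genuinely outside the regime of Theorem \ref{vanish2}, where $\chi(M)$ vanishes.

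With these inputs I would run the argument already used to prove Theorem \ref{vanish2}. There one uses the nef-ness of $T^*M$ together with Fulton--Lazarsfeld/Schur positivity and the refined inequalities of \cite{LZ}, valid for $n\le4$, to express the Chern numbers of $M$ as non-negative combinations of a few basic positive quantities, and then the vanishing $\chi_y(M)=0$ supplied by Theorem \ref{vanish1} collapses all of them to zero. Here I keep the same inequalities but replace the vanishing input by the bigness $\int_Mc_1(T^*M)^n>0$: the target is to bound the top Chern number $\int_Mc_n(T^*M)$ below by a positive multiple of $\int_Mc_1(T^*M)^n$, which then yields $(-1)^n\chi(M)>0$, precisely the assertion of Conjecture \ref{question} for these manifolds.

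The main obstacle is this last, strict, inequality. Fulton--Lazarsfeld positivity only gives $\int_Mc_n(T^*M)\ge0$, and a nef bundle carrying a trivial summand shows that bigness of the determinant alone does not force the top Chern number to be positive. What rescues the argument is that quasi-negative Ricci curvature forbids any flat, and hence any trivial, factor in $T^*M$, by the Cartan--Hadamard decomposition (\cite[p. 59]{Zhe}); converting this irreducibility into a quantitative lower bound of the form $\int_Mc_n(T^*M)\ge c\int_Mc_1(T^*M)^n$ with $c>0$ is exactly the delicate step, and it is the availability of the strengthened nef Chern number inequalities of \cite{LZ} only in low dimension that accounts for the restriction $n\le4$.
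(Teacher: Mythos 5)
There is a genuine gap at precisely the step your proposal defers. Your whole argument reduces to the strict lower bound $\int_M c_n(T_M^{\ast})\geq c\int_M c_1(T_M^{\ast})^n$ with $c>0$ for nef cotangent bundles, and you never prove it: you only observe that quasi-negative Ricci curvature excludes a trivial factor of $T_M^{\ast}$ and call the conversion of this into a quantitative bound ``the delicate step.'' No such inequality is available from the tools you cite. Fulton--Lazarsfeld positivity and \cite[Coro.\ 3.5]{LZ} give only $0\leq(-1)^n\text{(Chern numbers)}\leq(-c_1)^n$, i.e.\ \emph{upper} bounds by $(-c_1)^n$, never lower bounds; and absence of a trivial summand is a qualitative statement that does not by itself force $c_n(T_M^{\ast})$ to dominate a multiple of $c_1(T_M^{\ast})^n$. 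A further warning sign: your argument uses only nefness of $T_M^{\ast}$ (i.e.\ non-positive holomorphic bisectional curvature), so if it worked it would prove Conjecture \ref{question} in full for $n\leq4$, which is strictly stronger than what the paper itself can establish -- the paper needs non-positive \emph{sectional} curvature in an essential way.

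The paper's actual mechanism avoids any such Chern-number lower bound. After the same reduction (via \cite[Coro.\ 3.5]{LZ} it suffices to show that $(-c_1)^n>0$ implies $(-1)^n\chi(M)>0$), it proceeds in two stages. First, Zhang's Theorem \ref{vanish2-thm}: nefness of $T_M^{\ast}$, $(-c_1)^n>0$ and $n\leq4$ give ampleness of $K_M$, and then the Miyaoka--Yau inequality combined with the explicit expressions of the Todd genus in Chern numbers yields positivity of the signed \emph{Todd genus}, $(-1)^n\chi^0(M)>0$; this is where the restriction $n\leq4$ enters, not in the inequalities of \cite{LZ}, which hold in all dimensions. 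Second, the $L^2$-theory bridge: non-positive sectional curvature makes $M$ K\"ahler non-elliptic, so by Theorem \ref{Gromov} one has $b_i^{(2)}(M)=0$ for $i\neq n$, whence $(-1)^n\chi(M)=b_n^{(2)}(M)\geq0$ and, by Proposition \ref{vanish1-prop} (resting on Atiyah's $L^2$-index theorem), $\chi_y(M)\neq0$ if and only if $\chi(M)\neq0$. Since $\chi^0(M)\neq0$ forces $\chi_y(M)\neq0$, one concludes $(-1)^n\chi(M)>0$. In short, the strict positivity is harvested from the Todd genus via Miyaoka--Yau and then transported to the Euler characteristic by $L^2$-Hodge theory, rather than extracted from a direct inequality between $c_n$ and $c_1^n$; your proposal is missing both of these ingredients.
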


The rest of this article is organized as follows. We briefly collect in Section \ref{preliminaries} some basic facts on the signature and $\chi_y$-genus in the form we shall use them in this article. Sections \ref{proof of maintheorem}, \ref{proof of vanish1} and \ref{proof of vanish2} are devoted to the proofs of Theorems \ref{maintheorem}, \ref{vanish1} and \ref{vanish2} respectively. In the last Section \ref{appendix} entitled ``Appendix", some non-standard facts mentioned in Section \ref{preliminaries} will be proved or further explained for the completeness as well as for the reader's convenience.

\section{Preliminaries}\label{preliminaries}
\subsection{Signature and the $\chi_y$-genus}
We recall in this subsection some basic facts on the signature and Hirzebruch's $\chi_y$-genus.

Let $M$ be an oriented smooth manifold. Denote by $\text{sign}(M)$ the signature of $M$, which is defined to be the index of the intersection pairing on the middle-dimensional real cohomology when the dimension of $M$ is divisible by $4$ and zero otherwise. $\text{sign}(M)$ is a rationally linear combination of Pontrjagin numbers and hence a characteristic number, thanks to Hirzebruch's Signature Theorem (\cite{Hi}).
It is well-known that (\cite[\S6]{AS}) $\text{sign}(M)$ can be realized as the index of an elliptic operator $D$ on $M$: $$\text{sign}(M)=\dim_{\mathbb{C}}\ker(D)-\dim_{\mathbb{C}}\text{coker}(D),$$
which is called the \emph{signature operator} and $\ker(D)$ and $\text{coker}(D)$ are both subspaces of the (complexified) de-Rham cohomology groups on $M$. The celebrated Atiyah-Singer index theorem then provides an alternative proof to the above-mentioned Hirzebruch Signature Theorem.

We only briefly recall Hirzebruch's $\chi_y$-genus for compact complex manifolds, which is enough for our purpose in this article. We refer the reader to \cite{Li0} and \cite[\S3]{Li1} for its definition on general almost-complex manifolds and the summary on its various properties.

Let $M=(M,J)$ be a complex manifold with (complex) dimension $n$ and $\bar{\partial}$ the $d$-bar operator which
acts on the complex vector spaces $\Omega^{p,q}(M)$ ($0\leq p,q\leq
n$) of $(p,q)$-type differential forms on $(M,J)$ in the sense
of $J$. For each integer $0\leq p\leq n$, we have the following
Dolbeault-type elliptic complex
\be\label{DC}0\rightarrow\Omega^{p,0}(M)
\xrightarrow{\bar{\partial}}\Omega^{p,1}(M)
\xrightarrow{\bar{\partial}}\cdots\xrightarrow{\bar{\partial}}\Omega^{p,n}(M)\rightarrow
0,\ee whose index is denoted by $\chi^{p}(M)$ in the notation of Hirzebruch (\cite{Hi}):
\be\label{Hodgenumber}\chi^{p}(M):=\sum_{q=0}^{n}(-1)
^{q}\text{dim}_{\mathbb{C}}H^{p,q}
_{\bar{\partial}}(M)=\sum_{q=0}^{n}(-1)^{q}h^{p,q}(M),\ee
where
$H^{p,q}
_{\bar{\partial}}(M)$ are the Dolbeault cohomology groups and their dimensions $h^{p,q}(M)$ are the usual Hodge numbers of $M$. Note that $\chi^0(M)$ is the \emph{Todd genus} of $M$, which is  sometimes called the arithmetic genus in algebraic geometry.

The Hirzebruch's
$\chi_{y}$-genus, introduced by Hirzebruch in \cite{Hi} and denoted by $\chi_{y}(M)$, is the generating
function of these indices $\chi^p(M)$:
$$\chi_{y}(M):=\sum_{p=0}^{n}\chi^{p}(M)\cdot y^{p},$$
The general form of the Hirzebruch-Riemann-Roch theorem, which was established by Hirzebruch (\cite{Hi}) for projective manifolds and by Atiyah and Singer in the general setting (\cite[\S4]{AS}), allows us to express these $\chi^p(M)$ and thus
$\chi_y(M)$ in terms of Chern numbers of $M$ as follows
\be\label{HRR}\chi_y(M)=\mathlarger{\int}_M
\prod_{i=1}^n\frac{x_i(1+ye^{-x_i})}{1-e^{-x_i}},\ee
where $x_1,\ldots,x_n$ are Chern roots of $(M,J)$, i.e., the
$i$-th elementary symmetric polynomial of $x_1$, $\ldots$, $x_n$ represents the
$i$-th Chern class of $(M,J)$. Thus these $\chi^p(M)$ are also characteristic numbers. The formula (\ref{HRR}) implies that $\chi_y(M)\big|_{y=-1}=\chi(M)$, and, together with Hirzebruch's Signature Theorem, implies that $\chi_y(M)\big|_{y=1}=\text{sign}(M)$. Therefore the vanishing of $\chi_y(M)$ implies those of the Euler characteristic and the signature, as stated in Remark \ref{remark}.

\subsection{The $\chi_y$-genus and $L^2$-Hodge numbers}\label{read}
we briefly recall in this subsection that how to express the indices $\chi^p$ in (\ref{Hodgenumber}) in terms of the $L^2$-Hodge numbers for our later purpose. For the reader's convenience more related details are included in Section \ref{moredetailsonL2Hodgenumbers}. The reader may also consult \cite[\S4]{Li1}. A thorough treatment on these materials can be found in \cite[\S1]{Lu} or \cite[\S1]{Gr}.

Assume that $M=(M,g,J)$ is an $n$-dimensional complex manifold equipped with a Hermitian metric $g$, and $$\pi:~(\widetilde{M},\widetilde{g},\widetilde{J})\longrightarrow
(M,g,J)$$
its \emph{universal} cover with $\pi_1(M)$ played as an isometric group of deck transformations.

Let $\mathcal{H}^{p,q}_{(2)}(\widetilde{M})$ be the spaces of $L^2$-harmonic $(p,q)$-forms on $L^2\Omega^{p,q}(\widetilde{M})$, the squared integrable $(p,q)$-forms on $(\widetilde{M},\widetilde{g})$, and denote by $$\text{dim}_{\pi_1(M)}\mathcal{H}^{p,q}_{(2)}(\widetilde{M})$$
the \emph{Von Neumann dimension} of $\mathcal{H}^{p,q}_{(2)}(\widetilde{M})$ with respect to $\pi_1(M)$, which is a \emph{nonnegative real number} in our situation. Its precise definition is not important in our article and we refer the reader to \cite{Lu} for more details.
The $L^2$-Hodge numbers of $M$, denoted by $h^{p,q}_{(2)}(M)$, are defined to be
$$h^{p,q}_{(2)}(M):=\text{dim}_{\pi_1(M)}
\mathcal{H}^{p,q}_{(2)}(\widetilde{M})
\in\mathbb{R}_{\geq0},\qquad(0\leq p,q\leq n).$$
Like the situation of the usual Hodge numbers, it turns out that $h^{p,q}_{(2)}(M)$ are independent of the Hermitian metric $g$ and depend only on $(M,J)$.

The following fact is an application of Atiyah's $L^2$-index theorem (\cite{At}).
\begin{lemma}\label{L2expression}
These $\chi^p(M)$ can be similarly expressed in terms of the $L^2$-Hodge numbers $h^{p,q}_{(2)}(M)$ as follows
\be\label{factchiphodgenumbers}
\chi^p(M)=\sum_{q=0}^n(-1)^qh^{p,q}_{(2)}(M)\qquad 0\leq p\leq n.\ee
\end{lemma}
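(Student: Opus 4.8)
The plan is to apply Atiyah's $L^2$-index theorem to the very same Dolbeault-type elliptic complex (\ref{DC}) that was used to define $\chi^p(M)$. The key principle behind Atiyah's theorem is that for an elliptic operator (or complex) on the compact manifold $M$, lifted to the universal cover $\widetilde{M}$ on which $\pi_1(M)$ acts freely and cocompactly by isometric deck transformations, the ordinary Fredholm index on $M$ equals the corresponding $L^2$-index on $\widetilde{M}$, where the $L^2$-index is computed using Von Neumann dimensions with respect to $\pi_1(M)$. The point is that these two index computations yield the \emph{same} characteristic-number integrand, namely the local index density, which only sees the local Riemannian/Hermitian geometry and is therefore identical upstairs and downstairs.

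First I would fix $p$ and lift the complex (\ref{DC}) along $\pi$ to the $\bar{\partial}$-complex acting on the $L^2$-sections $L^2\Omega^{p,\bullet}(\widetilde{M})$, with $\widetilde{M}$ equipped with the pulled-back metric $\widetilde{g}$. Since $\pi$ is a covering and $\widetilde{g}=\pi^*g$, the lifted operators commute with the $\pi_1(M)$-action, so each cohomology space of the $L^2$-complex is a $\pi_1(M)$-module of finite Von Neumann dimension, and by Hodge theory on the complete manifold $\widetilde{M}$ these cohomology spaces are represented by the spaces $\mathcal{H}^{p,q}_{(2)}(\widetilde{M})$ of $L^2$-harmonic $(p,q)$-forms. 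Next I would invoke Atiyah's $L^2$-index theorem, which gives
$$\chi^p(M)=\sum_{q=0}^n(-1)^q\,\text{dim}_{\pi_1(M)}\mathcal{H}^{p,q}_{(2)}(\widetilde{M}).$$
Finally, substituting the definition $h^{p,q}_{(2)}(M)=\text{dim}_{\pi_1(M)}\mathcal{H}^{p,q}_{(2)}(\widetilde{M})$ yields precisely (\ref{factchiphodgenumbers}).

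The main thing to be careful about — which I expect to be the only real obstacle — is verifying that the hypotheses of Atiyah's theorem are genuinely in force, rather than the index-theoretic machinery itself, which is a black box once it applies. Concretely, one must check that $\pi_1(M)$ acts freely, properly discontinuously and cocompactly (so that the quotient is the compact $M$), that the deck transformations act isometrically (guaranteed here since $\widetilde{g}$ is the pullback of $g$), and that the relevant operators are $\pi_1(M)$-equivariant so that their kernels and cokernels carry well-defined Von Neumann dimensions. All of these follow routinely from $\pi$ being the universal cover of a compact manifold with the pulled-back Hermitian metric, so the verification is essentially bookkeeping; the substantive content is entirely packaged inside Atiyah's theorem \cite{At}. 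I would therefore keep the proof brief, stating the reduction to the lifted Dolbeault complex, citing \cite{At} for the equality of indices, and identifying the $L^2$-cohomology with $L^2$-harmonic forms via Hodge theory on $\widetilde{M}$.
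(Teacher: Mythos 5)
Your proposal is correct and follows essentially the same route as the paper: both apply Atiyah's $L^2$-index theorem to the Dolbeault complex (\ref{DC}) lifted to the universal cover, identify the $L^2$-kernel and $L^2$-cokernel with the spaces $\mathcal{H}^{p,q}_{(2)}(\widetilde{M})$ of $L^2$-harmonic forms, and read off (\ref{factchiphodgenumbers}) from the definition of $h^{p,q}_{(2)}(M)$. The only presentational difference is that the paper first rolls the complex up into the two-step elliptic operator $D_p=\bar{\partial}+\bar{\partial}^{\ast}$ mapping even-degree to odd-degree $(p,q)$-forms before invoking Atiyah's theorem, a standard reduction you leave implicit.
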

Lemma \ref{L2expression} is well-known to experts. Nevertheless, for the reader's convenience as well as for the completeness,
We shall indicate its proof in the Appendix, Section \ref{moredetailsonL2Hodgenumbers}.

\section{Proof of Theorem \ref{maintheorem}}\label{proof of maintheorem}
The hypothesis condition (\ref{Jiang condition}) in Theorem \ref{maintheorem} leads to the following consequence.
\begin{lemma}\label{technicalproposition}
Let $X$ be a connected finite CW-complex and its Jiang subgroup $J(X)$ satisfies
\be\label{Jiang condition2}J(X)\not\subset\bigcap_{\overset{H\lhd \pi_1(X)}{[\pi_1(X):H]<\infty}}H.\ee
Then there exist a finite-sheeted cover $\widetilde{X}$ of $X$ and its deck transformation $\tau$ of finite order such that $\tau$ is fixed-point-free and homotopic to the identity map. If moreover $X$ is a smooth (resp. complex) manifold, then this deck transformation $\tau$ is smooth (resp. holomorphic) either.
\end{lemma}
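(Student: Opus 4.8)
The plan is to translate the group-theoretic hypothesis (\ref{Jiang condition2}) into a monodromy statement on a suitable finite regular cover, and then to extract the required deck transformation by lifting a cyclic homotopy of the identity. Since the intersection in (\ref{Jiang condition2}) ranges over \emph{normal} subgroups of finite index, the failure of the inclusion hands me a specific normal subgroup $H\lhd\pi_1(X)$ of finite index together with an element $\xi\in J(X)$ satisfying $\xi\notin H$. I let $p:\widetilde{X}\to X$ be the cover associated with $H$; because $H$ is normal of finite index, this cover is finite-sheeted and regular, with finite deck transformation group $\pi_1(X)/H$. The benefit of choosing $\xi$ inside $J(X)=J(\mathrm{id}_X)$ is that, by the very definition of the Jiang subgroup, there is a cyclic homotopy $h_t:X\to X$ with $h_0=h_1=\mathrm{id}_X$ whose trace loop $t\mapsto h_t(x_0)$ represents $\xi$. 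I assemble this into a single map $\Phi:X\times[0,1]\to X$, $\Phi(x,t)=h_t(x)$, with $\Phi_0=\Phi_1=\mathrm{id}_X$.

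Next I would lift $\Phi$ to the cover. Applying the homotopy lifting property of $p$ to the composite $\Phi\circ(p\times\mathrm{id}):\widetilde{X}\times[0,1]\to X$, with initial lift $\mathrm{id}_{\widetilde{X}}$ (a legitimate lift of $\Phi_0=\mathrm{id}_X$ since $p\circ\mathrm{id}_{\widetilde{X}}=p$), produces $\widetilde{\Phi}:\widetilde{X}\times[0,1]\to\widetilde{X}$ with $\widetilde{\Phi}_0=\mathrm{id}_{\widetilde{X}}$ and $p\circ\widetilde{\Phi}_t=\Phi_t\circ p$ for every $t$. Setting $\tau:=\widetilde{\Phi}_1$, the relation $p\circ\tau=\Phi_1\circ p=p$ shows that $\tau$ covers the identity on the base; since $\widetilde{X}$ is connected and the cover is regular, uniqueness of lifts identifies $\tau$ with the unique deck transformation sending a fixed basepoint $\tilde{x}_0$ to $\widetilde{\Phi}_1(\tilde{x}_0)$, so $\tau$ is genuinely a deck transformation. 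The path $t\mapsto\widetilde{\Phi}_t(\tilde{x}_0)$ is exactly the $\widetilde{X}$-lift of the trace loop starting at $\tilde{x}_0$, so $\tau$ fixes $\tilde{x}_0$ precisely when this lift closes up, that is, precisely when $\xi\in H$. As $\xi\notin H$, the transformation $\tau$ is nontrivial.

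Finally I would read off the stated properties from standard covering-space facts. A nontrivial deck transformation of a cover has no fixed points (one that fixes a point is the identity, again by uniqueness of lifts), hence $\tau$ is fixed-point-free; it belongs to the finite deck group $\pi_1(X)/H$, hence has finite order; and $\widetilde{\Phi}_t$ itself is a homotopy from $\mathrm{id}_{\widetilde{X}}=\widetilde{\Phi}_0$ to $\tau=\widetilde{\Phi}_1$, so $\tau\simeq\mathrm{id}_{\widetilde{X}}$. When $X$ is a smooth (resp.\ complex) manifold, $\widetilde{X}$ inherits the structure for which $p$ is a local diffeomorphism (resp.\ local biholomorphism), and every deck transformation is then automatically a diffeomorphism (resp.\ biholomorphism); since $\tau$ is a deck transformation, this yields its smoothness (resp.\ holomorphicity) irrespective of the fact that the homotopy $h_t$ was only assumed continuous. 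I expect the one point deserving care to be the identification of $\widetilde{\Phi}_1$ as a \emph{nontrivial} deck transformation — i.e.\ correctly matching the monodromy of the lifted trace with the condition $\xi\notin H$ — while the freeness, the finiteness of the order, the homotopy to the identity, and the smooth/holomorphic regularity are all formal consequences of working on a finite regular cover.
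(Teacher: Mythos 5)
Your proposal is correct and follows essentially the same route as the paper's proof: extract $\xi\in J(X)$ and a finite-index normal subgroup $H$ with $\xi\notin H$, lift the cyclic homotopy to the cover corresponding to $H$, identify the time-one map with a deck transformation via unique lifting, and detect its nontriviality from the non-closing of the lifted trace loop. All the remaining properties (freeness, finite order, homotopy to the identity, smooth/holomorphic regularity) are obtained exactly as in the paper.
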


We first show that how to apply Lemma \ref{technicalproposition} to prove Theorem \ref{maintheorem}, which essentially follows the strategy of Farrell in \cite{Fa}, and postpone its proof to the end of this section.

If $M$ is a compact, connected and oriented smooth (resp. K\"{a}hler) manifold with a finite-sheeted cover $\widetilde{M}\stackrel{f}{\longrightarrow}M$, then $\widetilde{M}$ is also a \emph{compact} connected smooth (resp. K\"{a}hler) manifold equipped with the induced orientation. Since characteristic numbers are multiplicative with respect to finite covers, we have
$$\text{sign}(\widetilde{M})=\text{deg}(f)\cdot\text{sign}(M)$$
and
$$\chi_y(\widetilde{M})=\text{deg}(f)\cdot\chi_y(M)$$
respectively. Together with Lemma \ref{technicalproposition}, Theorem \ref{maintheorem} follows from the following fact.
\begin{lemma}
Let $M$ be an oriented smooth (resp. K\"{a}hler) manifold equipped with a fixed-point-free smooth (resp. holomorphic) finite cyclic group action. If some generator of this group action is homotopic to the identity map, then $\text{sign}(M)=0$ (resp. $\chi_y(M)=0$.)
\end{lemma}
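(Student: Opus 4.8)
The plan is to reduce both assertions to the Atiyah--Bott Lefschetz fixed point formula for elliptic complexes, exploiting the fact that a \emph{fixed-point-free} self-map forces the right-hand side of that formula (the sum of local fixed-point contributions) to be an empty sum, hence $0$. Write $\gamma$ for the chosen generator, so that $\gamma$ is fixed-point-free and $\gamma\simeq\mathrm{id}_M$. The essential observation is that a self-map homotopic to the identity induces the identity on de~Rham cohomology; I would then upgrade this triviality to the finer cohomological gradings attached to the signature and Dolbeault complexes, so that the Lefschetz number of $\gamma$ coincides with the ordinary index of the relevant operator, while fixed-point-freeness simultaneously forces that same Lefschetz number to vanish.

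For the smooth case I would first average an arbitrary Riemannian metric over the finite group to obtain a $\gamma$-invariant metric, so that $\gamma$ acts as an orientation-preserving isometry (orientation-preserving because $\deg\gamma=\deg(\mathrm{id})=1$). With respect to this metric, form the signature operator $D=d+d^{*}\colon\Omega^{+}\to\Omega^{-}$, where $\Omega^{\pm}$ are the $\pm1$-eigenbundles of the standard involution $\tau=i^{p(p-1)+\ell}\,{*}$ on complex-valued $p$-forms ($\dim_{\mathbb{R}}M=2\ell$); its index is $\mathrm{sign}(M)$. Since $\gamma$ is an orientation-preserving isometry it commutes with ${*}$, $d$ and $d^{*}$, hence with $\tau$ and $D$, so it acts on $\ker D$ and $\mathrm{coker}\,D$, which Hodge theory identifies with the self-dual and anti-self-dual harmonic forms. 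Because $\gamma\simeq\mathrm{id}$ acts as the identity on $H^{*}(M;\mathbb{C})$ and harmonic representatives are canonical, $\gamma$ acts as the identity on all harmonic forms, so the Lefschetz number is
\[
L(\gamma,D)=\mathrm{tr}\bigl(\gamma\mid\ker D\bigr)-\mathrm{tr}\bigl(\gamma\mid\mathrm{coker}\,D\bigr)=\dim_{\mathbb{C}}\ker D-\dim_{\mathbb{C}}\mathrm{coker}\,D=\mathrm{sign}(M).
\]
As $\gamma$ has no fixed points, the Atiyah--Bott formula gives $L(\gamma,D)=0$, whence $\mathrm{sign}(M)=0$.

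In the Kähler case I would instead apply the holomorphic Lefschetz fixed point theorem to the Dolbeault complex (\ref{DC}) twisted by the bundle $\Omega^{p}_{M}$ of holomorphic $p$-forms, for each $0\le p\le n$, using the canonical lift of $\gamma$ coming from $d\gamma$. The holomorphic Lefschetz number of $\gamma$ for this complex is
\[
L\bigl(\gamma,\Omega^{p}_{M}\bigr)=\sum_{q=0}^{n}(-1)^{q}\,\mathrm{tr}\bigl(\gamma^{*}\mid H^{p,q}_{\bar{\partial}}(M)\bigr).
\]
Here the Kähler hypothesis enters: since $\gamma$ is holomorphic it preserves the Hodge decomposition $H^{k}(M;\mathbb{C})=\bigoplus_{p+q=k}H^{p,q}_{\bar{\partial}}(M)$, and since $\gamma\simeq\mathrm{id}$ its action on each total $H^{k}$ is trivial; a map that is the identity on a direct sum and preserves each summand is the identity on each summand, so $\gamma^{*}=\mathrm{id}$ on every $H^{p,q}_{\bar{\partial}}(M)$. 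Consequently $L(\gamma,\Omega^{p}_{M})=\sum_{q}(-1)^{q}h^{p,q}(M)=\chi^{p}(M)$ by (\ref{Hodgenumber}), while the absence of fixed points again forces $L(\gamma,\Omega^{p}_{M})=0$. Hence $\chi^{p}(M)=0$ for all $p$, and summing against $y^{p}$ yields $\chi_{y}(M)=0$.

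The step I expect to be the main obstacle, or at least the one demanding the most care, is the \emph{descent} of homotopy-triviality from total cohomology to the finer gradings: knowing $\gamma^{*}=\mathrm{id}$ on $H^{*}(M;\mathbb{C})$ does not by itself determine the action on $\ker D$, on $\mathrm{coker}\,D$, or on the individual Hodge pieces $H^{p,q}_{\bar{\partial}}(M)$. Resolving this requires the two geometric inputs isolated above, namely that $\gamma$ may be taken to be an isometry (so that it commutes with ${*}$ and hence preserves the self-dual/anti-self-dual splitting) in the signature case, and that $\gamma$ is holomorphic on a Kähler manifold (so that it respects the Hodge type) in the $\chi_{y}$ case. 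Once this compatibility is secured, no local fixed-point computation is ever needed, since the fixed-point set is empty and the geometric side of the Lefschetz formula is vacuously zero.
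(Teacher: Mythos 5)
Your proposal is correct and follows essentially the same route as the paper: identify the Lefschetz number (equivariant index) of the generator with the ordinary index via homotopy invariance of cohomology plus Hodge theory, then invoke the Atiyah--Bott/Atiyah--Singer fixed-point localization to conclude it vanishes because the fixed-point set is empty. The only difference is expository: you make explicit the metric-averaging and the commutation with $*$ (resp.\ the descent of $\gamma^{*}=\mathrm{id}$ to each Hodge summand), points the paper compresses into the remark that $\ker D$, $\mathrm{coker}\,D$ and $H^{p,q}_{\bar{\partial}}(M)$ sit canonically inside de Rham cohomology.
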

\begin{proof}
Let the cyclic group be $G$ and a generator $g\in G$ is homotopic to the identity map.

Since both $\ker(D)$ and $\text{coker}(D)$ of the signature operator $D$ are subspaces of the de-Rham cohomology groups of $M$, homotopy invariance of de-Rham cohomology implies that
the equivariant indices of $D$ at $g$ and the identity element are indeed equal:
\be
\begin{split}
\text{sign}(g,M)&=
\text{Trace}(g\big|_{\ker(D)})-
\text{Trace}(g\big|_{\text{coker}(D)})\\
&=\text{Trace}(\text{id}\big|_{\ker(D)})-
\text{Trace}(\text{id}\big|_{\text{coker}(D)})\qquad(\text{$g$ is homotopic to the identity map})\\
&=\dim_{\mathbb{C}}\ker(D)-\dim_{\mathbb{C}}\text{coker}(D)\\
&=\text{sign}(M).
\end{split}
\nonumber
\ee

On the other hand, the Atiyah-Singer $G$-signature Theorem (\cite[Thm 6.12]{AS}) implies that equivariant index $\text{sign}(g,M)=0$ as the fixed-point set of this $G$-action is empty, which leads to the desired conclusion that $\text{sign}(M)=0$.

When $M$ is K\"{a}hler, we can define the Lefschetz number $\chi^p(g,M)$ of the elliptic complex (\ref{DC}) at $g$. Hodge theory tells us that for \emph{K\"{a}hler} manifolds the Dolbeault cohomology groups $H^{p,q}_{\bar{\partial}}(M)$ can be canonically viewed as subspaces of de-Rham cohomology groups and hence have the homotopy invariance (see \cite[Lemma 2.3]{Fa} for a slightly different treatment at this point). This implies that
\be
\begin{split}
\chi^p(g,M)&=
\sum_{q=0}^n(-1)^q\cdot\text{Trace}(g\big|_{H^{p,q}_{\bar{\partial}}(M)})
\\
&=\sum_{q=0}^n(-1)^q\cdot\text{Trace}(\text{id}\big|_{H^{p,q}_{\bar{\partial}}(M)})\qquad(\text{$g$ is homotopic to the identity map})\\
&=\sum_{q=0}^n(-1)^q\cdot\dim_{\mathbb{C}}H^{p,q}_{\bar{\partial}}(M)\\
&=\chi^p(M).
\end{split}\nonumber
\ee

On the other hand, the Lefschetz fixed-point formula of Atiyah-Bott-Singer, which was first treated by Atiyah and Bott in the isolated case in \cite{AB} and then by Atiyah and Singer in \cite[\S4]{AS} in the general setting, yields that $\chi^p(g,M)=0$ as this $G$-action has no fixed points. This leads to the desired conclusion that these $\chi^p(M)=0$ and hence $\chi_y(M)=0$.
\end{proof}

\begin{remark}\label{remark2}
\begin{enumerate}
\item
We can see from the proof of this lemma that if a compact \emph{connected} Lie group $G$ acts smoothly (resp. holomorphically) on an oriented smooth (resp. K\"{a}hler) manifold, then the equivariant index $\text{sign}(g,M)$ \big(resp. $\chi^p(g,M)$\big) is a constant:
$$\text{sign}(g,M)\equiv\text{sign}(M),
\qquad\big(\text{resp. $\chi^p(g,M)\equiv\chi^p(M)$},\big)\qquad\forall g\in G.$$
This observation is indeed the starting point of the rigidity phenomena of the Dolbeault complexes (\ref{DC}) on general complex manifolds (\cite{Ko}) and of the Dirac operator on spin manifolds (\cite{AH}). The latter in turn motivates the rigidity of the elliptic genera (\cite{Ta}, \cite{BT}, \cite{Liu1}, \cite{Liu2}).

\item
On the other hand, if a spin manifold admits a diffeomorphism which is homotopic to the identity map, its equivariant index may \emph{not} equal to the index of the Dirac operator as in this situation the kernel and cokernel of the Dirac operator in general have \emph{no} properties of homotopy invariance. So even if assuming the same hypotheses as in Theorem \ref{maintheorem} on spin manifolds, we cannot deduce the vanishing of the $\hat{A}$-genus.
\end{enumerate}
\end{remark}

\subsection{Proof of Lemma \ref{technicalproposition}}
The proof is based on some standard facts in covering space theory and a standard reference is \cite[\S1.3]{Ha}.
\begin{proof}
Let us choose once for all a basepoint $x_0$ in $X$.  By the hypothesis (\ref{Jiang condition2}), there exist an $\alpha\in J(X,x_0)$ and a normal subgroup $H$ of finite index in $\pi_1(X,x_0)$ such that $\alpha\not\in H$. Let $(\widetilde{X}, \widetilde{x}_0)\stackrel{p}{\longrightarrow}(X,x_0)$ be the finite-sheeted covering map which corresponds to $H$ and thus $p_{\ast}\big(\pi_1(\widetilde{X},\widetilde{x}_0)\big)=H$ (\cite[Prop. 1.36]{Ha}). By the definition of (\ref{jiangsubgroup}), there exists a homotopy $h_t: X\rightarrow X$ such that $h_0=h_1=\text{id}_X$ and $[h_t(x_0)]=\alpha$. By the homotopy lifting property (\cite[Prop. 1.30]{Ha}), there exists a unique homotopy $\widetilde{h}_t:\widetilde{X}\rightarrow\widetilde{X}$ with $p\circ\widetilde{h}_t=h_t\circ p$ and $\widetilde{h}_0=\text{id}_{\widetilde{X}}$, i.e., we have the following commutative diagram:
\be
 \xymatrix{\widetilde{X} \ar[rr]^-{\widetilde{h}_t} \ar[d]_-{p} && \widetilde{X} \ar[d]^-{p} \\
		  X \ar[rr]^-{h_t}& & X },\qquad\widetilde{h}_0=\text{id}_{\widetilde{X}}. \nonumber\ee

Note that the path $\widetilde{h}_t(\widetilde{x}_0)$ in $\widetilde{X}$ is a lift of the path $h_t(x_0)$ in $X$: $$p\circ\widetilde{h}_t(\widetilde{x}_0)=h_t\circ p(\widetilde{x}_0)=h_t(x_0).$$
Since $$[h_t(x_0)]=\alpha\not\in H=p_{\ast}\big(\pi_1(\widetilde{X},\widetilde{x}_0)\big),$$ the lifting path  $\widetilde{h}_t(\widetilde{x}_0)$ is \emph{not} a loop (\cite[Prop. 1.31]{Ha}) and hence \be\label{equality}\widetilde{x}_1:=\widetilde{h}_1(\widetilde{x}_0)\neq \widetilde{h}_0(\widetilde{x}_0)=\widetilde{x}_0.\ee
The subgroup $H$ is normal in $\pi_1(X,x_0)$ and thus the action of deck transformation group on each fiber of $p$ is transitive (\cite[Prop. 1.39]{Ha}), which means that there exists a deck transformation $\tau$ on $\widetilde{X}$ such that $\tau(\widetilde{x}_0)=\widetilde{x}_1$.
On the other hand $\widetilde{h}_1$ is also a lift of $p$ as $p\circ \widetilde{h}_1=h_1\circ p=p$. Then the unique lifting property (\cite[P. 1.34]{Ha}) implies that $\widetilde{h}_1=\tau$ as both of them send $\widetilde{x}_0$ to $\widetilde{x}_1$.

Now this $\tau=\widetilde{h}_1$ is the desired deck transformation. Firstly $\tau\neq\text{id}_{\widetilde{X}}$ by (\ref{equality}) and hence $\tau$ is fixed-point free, still due to the unique lifting property. Secondly, $\tau$ is of finite order as the deck transformation group is isomorphic to the quotient group $\pi_1(X,x_0)/H$ (\cite[Prop. 1.39]{Ha}), which is finite as $H$ is of finite index.

If $X$ is moreover a smooth (resp. complex) manifold, then the covering map $p$ is smooth (resp. holomorphic) and so is $\tau$ as locally $p$ is a diffeomorphism (resp. biholomorphism).
\end{proof}

\section{Proof of Theorem \ref{vanish1}}\label{proof of vanish1}
Denote by $b_i^{(2)}(M)$ the $L^2$-Betti numbers of a smooth manifold $M$ (see Section \ref{sectionL2Bettinumber} for more details on their definition and basic properties). The following well-known conjecture is usually attributed to Singer (\cite[\S11]{Lu}, \cite{Do}).
\begin{conjecture}[Singer Conjecture]\label{Singer conjecture}
Let $M$ be a $2n$-dimensional aspherical smooth orientable manifold. Then $b_i^{(2)}(M)=0$ whenever $i\neq n$. If moreover $M$ admits a negatively curved Riemannian metric, then $b_n^{(2)}(M)>0.$
\end{conjecture}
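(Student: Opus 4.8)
The statement is Singer's conjecture, a long-standing open problem, so the honest plan is not to claim a complete proof but to isolate its two independent assertions and describe where each can be pushed and where the wall lies. The conjecture splits into a \emph{vanishing} part, that the $L^2$-Betti numbers $b_i^{(2)}(M)$ are concentrated in the middle degree $i=n$, and a \emph{positivity} part, that $b_n^{(2)}(M)>0$ under the extra hypothesis of negative curvature. I would treat these separately, since they require genuinely different inputs: the first is an analytic concentration statement, the second an existence statement for a nonzero harmonic form.

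For the vanishing part the plan is to work $L^2$-analytically on the universal cover $\widetilde{M}$ and force the $L^2$-harmonic forms into the middle degree. First I would lift the metric and study the Laplacian on $L^2\Omega^i(\widetilde{M})$, aiming to show that the space of $L^2$-harmonic forms $\mathcal{H}^i_{(2)}(\widetilde{M})$ has vanishing von Neumann dimension for $i\neq n$. The only known mechanism producing such concentration is a spectral gap at zero away from the middle degree, and under a geometric hypothesis this can be engineered: in the K\"{a}hler setting Gromov's boundedness of the lifted K\"{a}hler form yields an $L^2$ hard-Lefschetz-type identity together with a Bochner--Weitzenb\"{o}ck estimate that annihilates $L^2$-harmonic forms off the middle degree, and an analogous real argument applies under strictly negative curvature. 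For a \emph{purely aspherical} $M$, however, no such analytic control is available, and this is exactly where I expect the main obstacle to be: asphericity constrains only the homotopy type of $\widetilde{M}$ and gives no handle on the spectrum of the Laplacian, so the general case is genuinely out of reach.

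For the positivity part I would argue existentially, aiming to exhibit a nonzero class in $\mathcal{H}^n_{(2)}(\widetilde{M})$ of positive von Neumann dimension. In the K\"{a}hler-hyperbolic case this is achieved by Gromov via the $L^2$ hard-Lefschetz isomorphism combined with an $L^2$ Riemann--Roch computation, which together force $(-1)^n\chi^{(2)}(M)>0$; since Atiyah's $L^2$-index theorem gives $\chi^{(2)}(M)=\chi(M)$, the vanishing part then yields $b_n^{(2)}(M)=(-1)^n\chi(M)>0$. For a general negatively curved (non-K\"{a}hler) manifold the same positivity is essentially the Hopf conjecture and is itself open, so here too only geometric subclasses are accessible. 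It is worth noting that this same circle of ideas is precisely what powers Theorem \ref{vanish1}: once the vanishing part is known for K\"{a}hler non-elliptic manifolds, the expression $\chi^p(M)=\sum_q(-1)^q h^{p,q}_{(2)}(M)$ of Lemma \ref{L2expression} forces all $L^2$-Hodge numbers off the anti-diagonal $p+q=n$ to vanish, whence $\chi_y(M)=0$ as soon as $\chi(M)=0$.

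In summary, the decisive difficulty is the vanishing part in the absence of curvature assumptions: every successful approach replaces bare asphericity by a geometric input — hyperbolicity, K\"{a}hler hyperbolicity, or non-positive/negative curvature — precisely in order to obtain the needed spectral gap. For the applications in this paper only that conditional form, valid for K\"{a}hler non-elliptic manifolds by the work of Gromov, Cao--Xavier and Jost--Zuo, is actually required, and that is the part I would invoke rather than attempt to establish the conjecture in full.
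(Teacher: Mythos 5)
Your assessment is exactly right: this statement is the Singer Conjecture, which the paper states as an open problem and never proves, relying instead---precisely as you propose---on the theorem of Gromov, Cao--Xavier and Jost--Zuo (Theorem \ref{Gromov}), which settles the vanishing part for K\"{a}hler non-elliptic manifolds and is all that the applications (Proposition \ref{vanish1-prop} and Theorem \ref{vanish1}) require. Your account of why the known mechanisms (spectral gap from K\"{a}hler hyperbolicity, $L^2$ hard Lefschetz, Atiyah's $L^2$-index theorem) need a geometric hypothesis beyond bare asphericity accurately reflects the status of the conjecture and the way the paper uses it.
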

\begin{remark}
Conjecture \ref{Singer conjecture} particularly implies that $(-1)^n\chi(M)\geq0$ for aspherical manifolds and $(-1)^n\chi(M)>0$ when admitting a negatively curved Riemannian metric \big(see formula (\ref{L2Bettinumber})\big), which is the assertion of another well-known \emph{Hopf Conjecture}.
\end{remark}
In order to attack the Singer Conjecture for K\"{a}hler manifolds, Gromov introduced the notion of K\"{a}hler hyperbolicity (\cite{Gr}) and it was further extended to K\"{a}hler non-ellipticity by Cao-Xavier and Jost-Zuo independently (\cite{CX}, \cite{JZ}). For a very recent development around this notion we refer to \cite{BDET}. Recall that a K\"{a}hler manifold $(M,\omega)$, where $\omega$ is the K\"{a}hler form, is called \emph{K\"{a}hler hyperbolic} (resp. \emph{K\"{a}hler non-elliptic}) if $\pi^{\ast}(\omega)=d\beta$, where $\widetilde{M}\stackrel{\pi}{\longrightarrow}M$ is the universal cover, such that $\beta$ is a \emph{bounded} (resp. \emph{sub-linear}) one-form on $\big(\widetilde{M},\pi^{\ast}(\omega)\big)$. The following example illustrates the ampleness of these two notions.
\begin{example}\label{example}

\begin{enumerate}

\item
Typical examples of K\"{a}hler hyperbolic manifolds include (\cite[p. 265]{Gr})
K\"{a}her manifolds homotopy equivalent to negatively curved Riemannian manifolds and compact quotients of the bounded homogeneous symmetric domains in $\mathbb{C}^n$ are K\"{a}hler hyperbolic. Submanifolds and product manifolds of  K\"{a}hler hyperbolic manifolds are still  K\"{a}hler hyperbolic.

\item
Typical examples of K\"{a}hler non-elliptic manifolds include (\cite[p. 4]{JZ}) K\"{a}hler hyperbolic manifolds, non-positively curved K\"{a}hler manifolds and holomorphic immersed submanifolds of complex tori.
\end{enumerate}
\end{example}

The following vanishing-type results due to Gromov, Cao-Xavier and Jost-Zuo (\cite{Gr}, \cite{CX}, \cite{JZ}) provide an affirmative solution to the Singer Conjecture and hence the Hopf conjecture for non-positively curved and negative curved K\"{a}hler manifolds.

\begin{theorem}[Gromov, Cao-Xavier, Jost-Zuo]\label{Gromov}
Let $M$ be an $n$-dimensional K\"{a}hler non-elliptic manifold. Then the $L^2$-Betti number $b_{i}^{(2)}(M)=0$ whenever $i\neq n$. If moreover $M$ is K\"{a}hler hyperbolic, then $b_{n}^{(2)}(M)>0$.
\end{theorem}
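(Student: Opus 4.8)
The plan is to work entirely on the universal cover $\widetilde{M}$, which with the pulled-back metric is a complete K\"{a}hler manifold on which $\pi_1(M)$ acts by isometries, and to study the spaces $\mathcal{H}^i_{(2)}(\widetilde{M})$ of $L^2$-harmonic $i$-forms whose Von Neumann dimensions are by definition the $L^2$-Betti numbers $b^{(2)}_i(M)$. The main tool is the Lefschetz operator $L=\widetilde{\omega}\wedge(\cdot)$, where $\widetilde{\omega}=\pi^\ast\omega$. On a complete K\"{a}hler manifold the K\"{a}hler identities persist, so $L$ together with its adjoint $\Lambda$ and the degree-counting operator generate an $\mathfrak{sl}_2$-action commuting with the Laplacian; in particular $L$ preserves $L^2$-harmonicity, and the $L^2$ hard Lefschetz theorem holds, giving that $L^{n-i}\colon \mathcal{H}^i_{(2)}(\widetilde{M})\to\mathcal{H}^{2n-i}_{(2)}(\widetilde{M})$ is injective for every $i\leq n$. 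The strategy is to show that $L$ acts as zero on $L^2$-harmonic forms, so that this injectivity forces $\mathcal{H}^i_{(2)}(\widetilde{M})=0$ for $i<n$; Poincar\'{e} duality then disposes of $i>n$.

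First I would treat the K\"{a}hler hyperbolic case, where $\widetilde{\omega}=d\beta$ with $\beta$ bounded. For a harmonic $\alpha\in\mathcal{H}^i_{(2)}(\widetilde{M})$ one has $d\alpha=0$, hence
\be
L\alpha=\widetilde{\omega}\wedge\alpha=d\beta\wedge\alpha=d(\beta\wedge\alpha). \nonumber
\ee
Because $\beta$ is bounded and $\alpha\in L^2$, the form $\beta\wedge\alpha$ is again $L^2$, so $L\alpha$ is a genuine $L^2$-exact form; but $L\alpha$ is also $L^2$-harmonic, and the $L^2$-Hodge (Kodaira) decomposition on the complete manifold $\widetilde{M}$ makes the harmonic forms orthogonal to the closure of the image of $d$. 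Hence $L\alpha=0$, and the vanishing $\mathcal{H}^i_{(2)}(\widetilde{M})=0$ for $i\neq n$ follows as above.

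The non-elliptic case, where $\beta$ is only sub-linear, is where the real work lies and is the main obstacle. Now $\beta\wedge\alpha$ need no longer be $L^2$, so one cannot directly conclude that $L\alpha$ is $L^2$-exact. The plan is to run a cutoff/approximation argument: choose a family of compactly supported Lipschitz cutoff functions $\phi_R$ equal to $1$ on the ball $B_R$ with $\abs{d\phi_R}\leq C/R$, write $L\alpha=d(\phi_R\,\beta\wedge\alpha)-d\phi_R\wedge\beta\wedge\alpha$, and estimate the error term. Sub-linear growth of $\beta$ (that is, $\abs{\beta(x)}=o(d(x,x_0))$ as $x\to\infty$) is exactly calibrated so that $d\phi_R\wedge\beta\wedge\alpha$ tends to $0$ in $L^2$ as $R\to\infty$, which shows that $L\alpha$ lies in the closure of the image of $d$; orthogonality to the harmonic forms again gives $L\alpha=0$. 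Making these estimates precise---controlling the support annuli, justifying the $L^2$-convergence, and verifying that the limit still represents an $L^2$-harmonic class---is the technical heart inherited from Cao--Xavier and Jost--Zuo, and is where the hypotheses must be used most carefully.

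Finally, for the positivity statement in the K\"{a}hler hyperbolic case I would invoke Atiyah's $L^2$-index theorem, which gives $\chi(M)=\sum_i(-1)^i b^{(2)}_i(M)$; combined with the vanishing already proved this collapses to $\chi(M)=(-1)^n b^{(2)}_n(M)$, so that $b^{(2)}_n(M)>0$ is equivalent to $\chi(M)\neq 0$. To produce a nonzero middle-degree $L^2$-harmonic form one must exploit K\"{a}hler hyperbolicity more strongly than mere vanishing: following Gromov, one applies an $L^2$-Riemann--Roch/index argument on $\widetilde{M}$ in which the bounded primitive $\beta$ is used to deform the relevant elliptic operator and show that the corresponding $L^2$-index (equivalently a holomorphic $L^2$-Euler characteristic of a suitable twist) is non-zero, forcing $\mathcal{H}^n_{(2)}(\widetilde{M})\neq 0$. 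I expect the vanishing in the sub-linear case and this positivity construction to be the two genuinely delicate points; the hard-Lefschetz bookkeeping and the duality step are routine once the complete-manifold $L^2$-Hodge theory is in place.
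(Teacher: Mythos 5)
A point of calibration first: the paper does not prove this theorem at all. It is imported, with attribution, from Gromov \cite{Gr}, Cao--Xavier \cite{CX} and Jost--Zuo \cite{JZ}, and is used as a black box in the proof of Theorem \ref{vanish1}. So your proposal has to be measured against the original proofs, not against anything in this paper.

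Your sketch of the vanishing half ($b_i^{(2)}(M)=0$ for $i\neq n$) does follow the original line correctly: $L=\widetilde{\omega}\wedge(\cdot)$ commutes with the Laplacian by the pointwise K\"{a}hler identities, $L^{n-i}$ is injective on $i$-forms for $i\leq n$, and for an $L^2$-harmonic $\alpha$ (closed and coclosed by Gaffney, using completeness of $\widetilde{M}$) one shows $L\alpha$ is simultaneously $L^2$-harmonic and an $L^2$-limit of exact forms, hence zero by the Kodaira decomposition. Your remark that sub-linearity of $\beta$ is exactly calibrated against the $C/R$ decay of $\lvert d\phi_R\rvert$ on the annulus is precisely the Cao--Xavier/Jost--Zuo mechanism; the only slip is cosmetic (the cutoff identity reads $\phi_R\, L\alpha=d(\phi_R\,\beta\wedge\alpha)-d\phi_R\wedge\beta\wedge\alpha$, and one passes to the limit afterwards).

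The genuine gap is the nonvanishing half, $b_n^{(2)}(M)>0$ for K\"{a}hler hyperbolic $M$. The reduction via Atiyah's theorem to $\chi(M)\neq 0$ is fine but is only a restatement, and ``deform the relevant elliptic operator using $\beta$ and show the $L^2$-index is non-zero'' skips over the step where Gromov's proof actually lives. Concretely, the natural twist is the trivial bundle on $\widetilde{M}$ with connection $d+ik\beta$, whose curvature $ik\widetilde{\omega}$ is very positive for large $k$, so that a Bochner--Kodaira argument concentrates the twisted $L^2$-cohomology in a single degree while the index grows like $k^n\int_M \omega^n\neq 0$. But $\beta$ is \emph{not} invariant under the deck transformations, so the twisted operator is not $\pi_1(M)$-equivariant, and Atiyah's $L^2$-index theorem --- the only tool your outline provides for computing von Neumann indices --- does not apply to it. Gromov has to handle the resulting merely \emph{projective} action of $\pi_1(M)$ on the twisted bundle (equivalently, prove an index and $\Gamma$-Fredholm theory adapted to such twists, together with the spectral-gap statements that make the deformation argument legitimate). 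Without confronting this equivariance failure, the proposed index computation cannot even be set up, so the positivity assertion in your proposal is not merely ``delicate'' but unproved.
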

\begin{remark}
By combining Gromov's idea with some special properties of the $\chi_y$-genus, the author deduced in \cite[Thm 2.1]{Li1} that K\"{a}hler hyperbolic manifolds indeed satisfy a family of optimal Chern number inequalities and the first one is exactly $(-1)^nc_n\geq n+1$ , which is an improved inequality expected by the Hopf conjecture.
\end{remark}

With the facts above and in Section \ref{sectionL2Bettinumber} in hand, we can now show the following fact.
\begin{proposition}\label{vanish1-prop}
Let $M$ be an $n$-dimensional K\"{a}hler manifold satisfying $b_{i}^{(2)}(M)=0$ whenever $i\neq n$. Then the Euler characteristic of $M$ is zero if and only if $\chi_y(M)=0$.
\end{proposition}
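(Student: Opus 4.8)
The plan is to rewrite both genera in terms of the $L^2$-Hodge numbers $h^{p,q}_{(2)}(M)$ and then exploit the single fact that these numbers are nonnegative. The starting point is the $L^2$-Hodge decomposition for K\"ahler manifolds (recalled in Section~\ref{sectionL2Bettinumber}), which reads
$$b_k^{(2)}(M)=\sum_{p+q=k}h^{p,q}_{(2)}(M),\qquad 0\le k\le 2n.$$
Because each summand satisfies $h^{p,q}_{(2)}(M)\ge 0$, the hypothesis $b_i^{(2)}(M)=0$ for $i\neq n$ forces $h^{p,q}_{(2)}(M)=0$ whenever $p+q\neq n$. Thus the only $L^2$-Hodge numbers that may be nonzero lie on the anti-diagonal $q=n-p$.

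Next I would substitute this vanishing into Lemma~\ref{L2expression}. The alternating sum $\chi^p(M)=\sum_{q=0}^n(-1)^q h^{p,q}_{(2)}(M)$ then collapses to its single surviving term, giving
$$\chi^p(M)=(-1)^{n-p}h^{p,n-p}_{(2)}(M),\qquad 0\le p\le n.$$
Recalling from Section~\ref{preliminaries} that $\chi(M)=\chi_y(M)\big|_{y=-1}=\sum_{p=0}^n(-1)^p\chi^p(M)$, the alternating signs cancel against the $(-1)^{n-p}$ above, and summing over $p$ yields the clean identity $(-1)^n\chi(M)=\sum_{p=0}^n h^{p,n-p}_{(2)}(M)=b_n^{(2)}(M)$.

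The biconditional then follows. The direction $\chi_y(M)=0\Rightarrow\chi(M)=0$ is immediate and uses no hypothesis, since $\chi(M)=\chi_y(M)\big|_{y=-1}$. For the converse direction $\chi(M)=0\Rightarrow\chi_y(M)=0$, I would argue that $\chi(M)=0$ gives $\sum_{p}h^{p,n-p}_{(2)}(M)=0$; as every summand is nonnegative, each $h^{p,n-p}_{(2)}(M)=0$, hence each $\chi^p(M)=0$ by the displayed formula, and therefore $\chi_y(M)=\sum_{p}\chi^p(M)y^p$ vanishes identically. I expect the main obstacle to be not an estimate but the bookkeeping that makes nonnegativity usable: one must verify that the Hodge sum really collapses to the anti-diagonal so that, after the reweighting by $(-1)^p$, all terms carry the uniform sign $(-1)^n$. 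This is precisely what converts ``a signed sum vanishes'' into ``a sum of nonnegative reals vanishes,'' which is the crux that fails for the ordinary Hodge numbers and is recovered here only through the Singer-type hypothesis on the $L^2$-Betti numbers.
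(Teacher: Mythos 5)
Your proof is correct and takes essentially the same route as the paper: both use the $L^2$-Hodge decomposition plus nonnegativity to kill all $h^{p,q}_{(2)}(M)$ with $p+q\neq n$, identify $(-1)^n\chi(M)$ with the nonnegative sum $\sum_p h^{p,n-p}_{(2)}(M)$, and then conclude via Lemma \ref{L2expression} that all $\chi^p(M)$ vanish. The only cosmetic difference is that the paper expresses $\chi(M)$ through the $L^2$-index identity $\chi(M)=\sum_i(-1)^i b_i^{(2)}(M)$ of (\ref{L2Bettinumber}), whereas you recover the same identity from $\chi(M)=\chi_y(M)\big|_{y=-1}$ together with the collapsed formula $\chi^p(M)=(-1)^{n-p}h^{p,n-p}_{(2)}(M)$.
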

\begin{proof}
One direction is obvious as $\chi_y(M)\big|_{y=-1}$ is the Euler characteristic. For the converse direction, we first note that the hypotheses $b_{i}^{(2)}(M)=0$ whenever $i\neq n$, together with the $L^2$-Hodge decomposition (\ref{hodge decomposition}), imply that
\be\label{1}h^{p,q}_{(2)}(M)=0,\qquad\text{whenever $p+q\neq n$}.\ee

On the other hand,
\be\begin{split}
\chi(M)&=\sum_{i=0}^{2n}(-1)^ib_i^{(2)}(M)\qquad\big(\text{by (\ref{L2Bettinumber})}\big)\\
&=(-1)^nb_n^{(2)}(M)\qquad\big(\text{by hypothesis condition}\big)\\
&=(-1)^{n}\sum_{p=0}^{n}h^{p,n-p}_{(2)}(M)\qquad\big(\text{by $L^2$-Hodge decomposition (\ref{hodge decomposition}})\big).
\end{split}
\ee
Thus under the hypothesis, the Euler characteristic $\chi(M)=0$ implies that
\be\label{2}h^{p,n-p}_{(2)}(M)=0,\qquad\text{for all $0\leq p\leq n$}.\ee
Combining (\ref{1}) with (\ref{2}) yields that all the $L^2$-Hodge numbers vanish, and hence so are $\chi^p(M)$ due to Lemma \ref{L2expression}.
\end{proof}
Now we are ready to prove the following consequence, which is exactly Theorem \ref{vanish1}.
\begin{corollary}[=Theorem \ref{vanish1}]\label{vanish1-coro}
Let $M$ be a K\"{a}hler non-elliptic manifold. Then the Euler characteristic of $M$ vanishes if and only if $\chi_y(M)=0$. In particular, a non-positively curved K\"{a}hler manifold $M$ with non-trivial $\text{Z}\big(\pi_1(M)\big)$ has vanishing $\chi_y$-genus: $\chi_y(M)=0$.
\end{corollary}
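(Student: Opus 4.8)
The plan is to obtain both assertions by assembling results already in hand, since the analytic and topological substance has been isolated into earlier statements. For the equivalence, I would begin by invoking Theorem \ref{Gromov}: because $M$ is K\"{a}hler non-elliptic, its $L^2$-Betti numbers satisfy $b_i^{(2)}(M)=0$ for every $i\neq n$. This is exactly the hypothesis required by Proposition \ref{vanish1-prop}, so applying that proposition directly gives that $\chi(M)=0$ if and only if $\chi_y(M)=0$. This settles the first sentence with no further work, the real content having been absorbed into Proposition \ref{vanish1-prop} (and, underneath it, the $L^2$-Hodge decomposition together with Lemma \ref{L2expression}).

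For the ``in particular'' clause, let $M$ be a non-positively curved K\"{a}hler manifold with $\text{Z}\big(\pi_1(M)\big)$ nontrivial. First I would record that $M$ is K\"{a}hler non-elliptic: this is precisely Example \ref{example}(2), which lists non-positively curved K\"{a}hler manifolds among the K\"{a}hler non-elliptic ones, so the equivalence just proved applies verbatim to $M$. Next I would observe that $M$ is aspherical, by the Cartan--Hadamard theorem, since a compact manifold of non-positive sectional curvature has contractible universal cover. With asphericity established and $\text{Z}\big(\pi_1(M)\big)$ nontrivial, Theorem \ref{JG} yields $\chi(M)=0$. Feeding this into the equivalence then forces $\chi_y(M)=0$, as claimed.

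The argument is thus a short chain of applications rather than a computation, and the only point demanding care is that the single geometric hypothesis of non-positive curvature must simultaneously supply two logically independent inputs: the \emph{analytic} input (K\"{a}hler non-ellipticity, hence the vanishing of all off-middle $L^2$-Betti numbers through Theorem \ref{Gromov}) and the \emph{topological} input (asphericity, hence the Jiang--Gottlieb vanishing of the Euler characteristic through Theorem \ref{JG}). These come from different cited sources, so what I would check most carefully is that both genuinely hold for the same class of $M$ and can be combined; no new estimate or construction is needed beyond this verification.
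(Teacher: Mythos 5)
Your proposal is correct and follows exactly the paper's own argument: Theorem \ref{Gromov} plus Proposition \ref{vanish1-prop} for the equivalence, then Example \ref{example}(2), Cartan--Hadamard asphericity, and Theorem \ref{JG} for the ``in particular'' clause. No gaps; your version simply makes explicit the dual role of non-positive curvature (analytic and topological inputs) that the paper leaves implicit.
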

\begin{proof}
The first assertion follows from Theorem \ref{Gromov} and Proposition \ref{vanish1-prop}. For the second assertion, only note that a non-positively curved K\"{a}hler manifold $M$ is both non-elliptic (Example \ref{example}) and aspherical. Asphericity and Theorem \ref{JG} imply that $\chi(M)=0$ and hence $\chi_y(M)=0$.
\end{proof}

\section{Proof of Theorem \ref{vanish2}}\label{proof of vanish2}Various notions and basic facts in algebraic geometry used in this section can be found in \cite{La1} and \cite{La2}.


In \cite{DPS} Demailly, Peternell and Schneider systematically investigated numerically-effective (\emph{nef} for short) vector bundles over K\"{a}hler manifolds and paid special attention to those K\"{a}hler manifolds whose (holomorphic) tangent bundles are nef. Among other things, they obtained that
(\cite[Coro. 5.5]{DPS}) a K\"{a}hler manifold $M$ with nef tangent bundle $T_M$ is Fano, i.e., the anti-canonical line bundle $K_M^{-1}$ is ample, if and only if its Todd genus $\chi^{0}(M)>0$. For related applications of this result, we refer to \cite[Thm 1.2]{Yang} and \cite[Thm 3.2]{LZ} and the comments therein.

Inspired by this result, Qi Zhang conjectured a dual version (\cite[p. 779]{Zha}): the canonical line bundle $K_M$ of a K\"{a}hler manifold $M$ with nef cotangent bundle $T_M^{\ast}$ is ample if and only if the signed Todd genus $(-1)^n\chi^{0}(M)>0$. He showed the ``if" part for all dimensions $n$ and the ``only if" part when $n\leq4$ (\cite[Thm 4]{Zha}). For our purpose, we state a slight variant of Zhang's result as follows and indicate a proof for the reader's convenience.
\begin{theorem}[Qi Zhang]\label{vanish2-thm}
Let $M$ be a K\"{a}hler manifold with nef $T_M^{\ast}$ and dimension $n\leq4$. If the Chern number $(-c_1)^n>0$, then the signed Todd genus $(-1)^n\chi^0(M)>0$.
\end{theorem}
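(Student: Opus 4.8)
The plan is to reduce the statement to the positivity of a single holomorphic Euler characteristic and then exploit the bigness and nefness of the canonical bundle. First I would note that, since $T_M^{\ast}$ is nef, its determinant $K_M=\det T_M^{\ast}$ is a nef line bundle with $c_1(K_M)=-c_1$, so the hypothesis $(-c_1)^n>0$ says precisely that $K_M$ is \emph{big and nef}. By Hirzebruch--Riemann--Roch one has $\chi^0(M)=\int_M\mathrm{Td}(M)$, while Serre duality gives the reformulation $(-1)^n\chi^0(M)=\chi(M,K_M)$. More flexibly, I would study the Riemann--Roch polynomial $P(m):=\chi(M,K_M^{\otimes m})=\int_M e^{-mc_1}\,\mathrm{Td}(M)$, a polynomial of degree $n$ in $m$ whose leading coefficient is $(-c_1)^n/n!>0$ and which satisfies the Serre-duality functional equation $P(m)=(-1)^nP(1-m)$; the goal then becomes to prove $P(1)>0$.

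Then I would feed in two kinds of positivity. On the analytic side, since $K_M$ is big and nef, Kawamata--Viehweg vanishing yields $H^i(M,K_M^{\otimes m})=0$ for $i>0$ and $m\ge2$, so $P(m)=h^0(M,K_M^{\otimes m})>0$ for every integer $m\ge2$; by the functional equation $P$ then carries the sign $(-1)^n$ at every integer $m\le-1$. On the algebraic side, expanding the coefficients of $P$ through $\mathrm{Td}(M)$ and rewriting the Todd polynomials in the Chern classes $c_i=(-1)^ic_i(T_M^{\ast})$ of the \emph{nef} bundle $T_M^{\ast}$, I would invoke the Demailly--Peternell--Schneider inequalities: nefness of $T_M^{\ast}$ forces all Schur numbers, in particular $c_2(T_M^{\ast})\ge0$ and the products $c_1(K_M)^kc_j(T_M^{\ast})\ge0$, to be nonnegative. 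For $n\le2$ these inputs already close the argument, since $(-1)^n\chi^0$ equals $\tfrac{1}{2}(-c_1)$ when $n=1$ and $\tfrac{1}{12}\big((-c_1)^2+c_2(T_M^{\ast})\big)$ when $n=2$, both strictly positive because $(-c_1)^n>0$ and $c_2(T_M^{\ast})\ge0$.

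The substance of the theorem is the range $n=3,4$, where DPS only supplies ``$\ge0$'' and where, for $n=4$, the Todd polynomial contributes the term $-c_1^4=-(-c_1)^4<0$ with the \emph{wrong} sign, so no term may simply be discarded; concretely one must show $\int_M(-c_1^4+4c_1^2c_2+3c_2^2+c_1c_3-c_4)>0$, while for $n=3$ the requirement collapses to $c_1(K_M)\cdot c_2(T_M^{\ast})>0$. My plan to upgrade the nonnegativities to these strict inequalities is to use bigness: the base-point-free theorem makes $K_M$ semiample, producing the birational canonical morphism $\phi\colon M\to M_{\mathrm{can}}$ with $K_M=\phi^{\ast}H$ for an ample class $H$, whence $c_1(K_M)^k\cdot c_j(T_M^{\ast})=H^k\cdot\phi_{\ast}c_j(T_M^{\ast})$ is strictly positive unless the pseudo-effective class $c_j(T_M^{\ast})$ is entirely contracted by $\phi$, an eventuality one excludes using that $K_M$ is big. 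I expect the genuinely hard step to be $n=4$: taming the negative $-c_1^4$ contribution cannot be done by Schur positivity alone and forces a careful use of the geometry of the canonical morphism (equivalently, a proof that the real roots of $P$ lie in $(0,1)$), which is exactly the point at which Zhang's argument is confined to $n\le4$.
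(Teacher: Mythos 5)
Your setup (big and nef $K_M$, the reduction to a single Euler characteristic, the identification of $n=3,4$ as the real content) is accurate, but the proposal stops exactly where the theorem has to be proved: for $n=3$ you need \emph{strict} positivity of $(-c_1)\cdot c_2$, and for $n=4$ you must dominate the negative term $-c_1^4$, and you explicitly defer both to ``a careful use of the geometry of the canonical morphism'' without supplying it. This is a genuine gap, and the missing idea is not the canonical model at all. The paper's proof (following Zhang) runs: (i) since $K_M$ is big and nef, $M$ is Moishezon, hence projective (being K\"ahler); (ii) nefness of $T_M^{\ast}$ on a projective manifold excludes rational curves, and a big and nef canonical bundle on a projective manifold without rational curves is \emph{ample}; (iii) ampleness of $K_M$ yields, via Yau's solution of the Calabi conjecture, a K\"ahler--Einstein metric and hence the Miyaoka--Yau inequality
\[
c_2(-c_1)^{n-2}\;\geq\;\frac{n}{2(n+1)}\,(-c_1)^n\;>\;0 .
\]
This single strict inequality is what your outline lacks. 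For $n=3$ it gives $(-c_1)c_2>0$ at once. For $n=4$ it reads $5c_1^2c_2\geq 2c_1^4$, and combined with the Schur-type inequalities $c_1c_3-c_4\geq0$ and $c_2^2\geq0$ (the only place where nefness of $T_M^{\ast}$ enters beyond step (ii)) one gets
\[
720\,\chi^0(M)=(c_1c_3-c_4)+3c_2^2+\tfrac12\big(5c_1^2c_2-2c_1^4\big)+\tfrac32\,c_1^2c_2\;\geq\;\tfrac32\,c_1^2c_2\;\geq\;\tfrac35\,c_1^4\;>\;0 .
\]
So the negative $-c_1^4$ term is tamed not by root-location arguments for the Hilbert polynomial but by trading it against $c_1^2c_2$ via Miyaoka--Yau.

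Two secondary points. First, your claim that Kawamata--Viehweg vanishing gives $P(m)=h^0(M,K_M^{\otimes m})>0$ for \emph{every} integer $m\geq2$ is not justified: KV gives $P(m)=h^0\geq0$, and strict nonvanishing of $h^0(2K_M)$ for a merely big and nef $K_M$ is essentially Kawamata's effective nonvanishing problem, which is open in higher dimensions (it is harmless here only because, once ampleness is known, one never needs it). Second, the contraction argument you sketch (``strictly positive unless $c_j(T_M^{\ast})$ is contracted by $\phi$, which bigness excludes'') is only a heuristic; bigness of $K_M$ by itself does not prevent a pseudo-effective class like $c_2(T_M^{\ast})$ from being crushed by the canonical morphism, so this step would also need a real proof. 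Both issues evaporate once ampleness of $K_M$ and the Miyaoka--Yau inequality are put in place.
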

\begin{proof}
The ideas of the proof are essentially scattered in \cite{Zha}, although not stated as above. Below some points of our proof are different from those in \cite{Zha}.  

\emph{Claim 1: $M$ is of general type and projective.}
Indeed, the nefness of $T_M^{\ast}$ implies that of $K_M$ (\cite[p. 24]{La2}), which, together with the condition $(-c_1)^n>0$, implies that $K_M$ is big (\cite[Thm 0.5]{DP}). Then $M$ is Moishezon and hence projective as it is K\"{a}hler (\cite[p. 95]{MM}).

\emph{Claim 2: $K_M$ is ample, i.e., the first Chern class $c_1(M)<0$.} The nefness of $T_M^{\ast}$ and the projectivity of $M$ implies that it contains no rational curves. Hence $K_M$ is ample (\cite[p. 785]{Zha}, \cite[p. 219]{De}, \cite[p.1481-1482]{CY}).

For simplicity we write $c_i:=c_i(M)$. Note that the ampleness of $K_M$ indeed holds for all \emph{n}. Then the Miyaoka-Yau Chern number inequality (\cite{Yau}) reads
\be\label{Chern number inequality}
c_2(-c_1)^{n-2}\geq \frac{n}{2(n+1)}(-c_1)^n>0.
\ee
When $n=2$ or $3$, $(-1)^n\chi^0(M)=\frac{1}{12}(c_1^2+c_2)$ or $-\frac{1}{24}c_1c_2$ respectively (\cite[p. 14]{Hi}). Its positivity follows from (\ref{Chern number inequality}) directly.

When $n=4$, (\ref{Chern number inequality}) reads
\be\label{3}5c_2c_1^2\geq2c_1^4.\ee
On the other hand, we have (cf. \cite[Thm 2.9]{LZ})
\be\label{4}c_1c_3-c_4\geq0,\qquad c_2^2\geq0.\ee
Therefore,
\be\begin{split}
(-1)^4\chi^0(M)&=\frac{1}{720}(-c_4+c_1c_3+3c_2^2+
4c_1^2c_2-c_1^4)\qquad\big(\text{\cite[p. 14]{Hi}}\big)\\
&=\frac{1}{720}\big[(c_1c_3-c_4)+3c_2^2+
\frac{1}{2}(5c_1^2c_2-2c_1^4)+\frac{3}{2}c_1^2c_2\big]\\
&\geq\frac{1}{480}c_1^2c_2\qquad\big(\text{by (\ref{3}) and (\ref{4})}\big)\\
&\geq\frac{1}{1200}c_1^4\qquad\big(\text{by (\ref{3})}\big)\\
&>0.\qquad\big(\text{by the hypothesis condition}\big)
\end{split}\nonumber\ee
\end{proof}
We now arrive at the proof of Theorem \ref{vanish2}.
\begin{theorem}[=Theorem \ref{vanish2}]\label{vanish2-coro}
Let $M$ be an $n$-dimensional non-positively curved K\"{a}hler manifold with non-trivial $\text{Z}\big(\pi_1(M)\big)$. Then all the Chern numbers of $M$ vanish whenever $n\leq4$.
\end{theorem}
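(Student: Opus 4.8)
The plan is to combine the dimension-independent vanishing of the $\chi_y$-genus from Theorem \ref{vanish1} with the low-dimensional positivity result Theorem \ref{vanish2-thm}, treating the two extreme values of the Chern-number data separately and then bootstrapping to all Chern numbers. First I would record what Theorem \ref{vanish1} already gives us for free: since a non-positively curved K\"{a}hler manifold is K\"{a}hler non-elliptic and aspherical with non-trivial center, we have $\chi(M)=0$ and hence $\chi_y(M)=0$ identically in $y$. Reading off coefficients, this means every $\chi^p(M)=0$ for $0\le p\le n$; in particular the Todd genus $\chi^0(M)=0$ and all the Hirzebruch numbers $\chi^p$ vanish. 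This already kills a great many Chern numbers, but not all of them, so the remaining work is to show that the extra vanishing forced by the geometry, together with the Chern-number inequalities, pins down the rest.

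Next I would exploit the fact that non-positive sectional curvature forces the cotangent bundle $T_M^{\ast}$ to be nef (this is the differential-geometric input; non-positivity of bisectional curvature dominates, as noted after Conjecture \ref{question}). With $T_M^{\ast}$ nef we are in the hypotheses of Theorem \ref{vanish2-thm} for $n\le 4$. The key observation is the contrapositive: Theorem \ref{vanish2-thm} says that if $(-c_1)^n>0$ then $(-1)^n\chi^0(M)>0$. But we have just shown $\chi^0(M)=0$. Hence we must have $(-c_1)^n\le 0$. On the other hand, nefness of $T_M^{\ast}$ makes $K_M=-c_1$ a nef class, so $(-c_1)^n\ge 0$ by the basic positivity of intersection numbers of nef classes. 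Combining, $(-c_1)^n=0$, i.e. the top self-intersection $c_1^n=0$.

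The heart of the argument is then to propagate these two pieces of vanishing — the identities $\chi^p(M)=0$ for all $p$ and the single Chern-number identity $c_1^n=0$ — to \emph{every} Chern number in dimension $n\le 4$. Here I would use that for small $n$ the space of Chern numbers is low-dimensional and spanned in a controlled way. Concretely: in dimension $2$ the monomials are $c_1^2,c_2$, and the two linear relations $\chi^0=\tfrac{1}{12}(c_1^2+c_2)=0$ and $c_1^2=0$ already force $c_1^2=c_2=0$. In dimension $3$ the monomials $c_1^3,c_1c_2,c_3$ must be matched against $\chi^0,\chi^1$ (equivalently the signature/Euler data) plus $c_1^3=0$; and in dimension $4$ the monomials $c_1^4,c_1^2c_2,c_2^2,c_1c_3,c_4$ are constrained by $\chi^0,\chi^1,\chi^2$, by $c_1^4=0$, and by the nef inequalities $c_1c_3-c_4\ge0$ and $c_2^2\ge0$ from \eqref{4} together with the Miyaoka-Yau inequality \eqref{3}. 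I expect the main obstacle to be dimension $4$: one must verify that the linear system coming from $\chi^0=\chi^1=\chi^2=0$ and $c_1^4=0$, combined with the \emph{inequalities} (which now collapse to equalities because their positive combinations are forced to vanish), leaves no nonzero Chern number. The cleanest route is probably to feed $c_1^4=0$ back into the displayed chain of inequalities in the proof of Theorem \ref{vanish2-thm}, forcing each inequality there to be an equality, thereby extracting $c_1^2c_2=0$, $c_2^2=0$, and $c_1c_3-c_4=0$; the remaining $\chi^p=0$ relations then close the system and yield $c_4=c_1c_3=0$ as well. I would double-check that the five dimension-$4$ monomials are genuinely determined and that no hidden monomial (e.g. a product type already excluded by $c_1^n=0$ or by an odd-degree vanishing) has been overlooked.
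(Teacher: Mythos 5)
Your first two steps are sound and coincide with the paper's own reduction: Theorem \ref{vanish1} gives $\chi_y(M)=0$, hence $\chi^0(M)=0$; non-positive curvature gives nef $T_M^{\ast}$; and the contrapositive of Theorem \ref{vanish2-thm}, combined with $(-c_1)^n\geq 0$ for the nef class $K_M$, yields $(-c_1)^n=0$. Your bookkeeping in dimensions $n\leq 3$ is also complete, since there $c_1^n=0$, $\chi^0(M)=0$ and $c_n=\chi(M)=0$ already exhaust all the Chern numbers.

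The gap is exactly where you feared, in dimension $4$, and it is not merely a verification left undone: the inequality you lean on is unavailable. You propose to obtain $c_1^2c_2\geq 0$ (and then collapse everything to equalities) by ``feeding $c_1^4=0$ back into'' the displayed chain in the proof of Theorem \ref{vanish2-thm}, i.e.\ by invoking the Miyaoka--Yau inequality \eqref{3}. But \eqref{3} was established there only under the hypothesis $(-c_1)^n>0$: that hypothesis is what drives Claims 1 and 2 of that proof ($M$ projective of general type, $K_M$ ample), and Yau's inequality needs $K_M$ ample. In your situation you have just proved $c_1^4=0$, which is incompatible with ampleness of $K_M$ (ampleness would force $K_M^4>0$; indeed $M$ need not even be projective --- a flat complex torus is a legitimate instance). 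So the chain you want to collapse was never established for your manifold, and the argument is circular. Moreover the inequality $c_1^2c_2\geq0$ cannot be dispensed with: the linear system $\chi^0=\chi^1=\chi^2=0$, $c_1^4=0$ leaves the one-parameter family of solutions $(c_1^4,c_1^2c_2,c_2^2,c_1c_3,c_4)=t\,(0,-3,4,0,0)$, and the two inequalities in \eqref{4} only give $t\geq0$ (the first is vacuous on this family); without an inequality of the opposite sense on $c_1^2c_2$ the system does not close.

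The repair --- and the route the paper actually takes --- is to draw all positivity from nefness of $T_M^{\ast}$ rather than from ampleness of $K_M$. By \cite[Coro. 3.5]{LZ}, for a K\"{a}hler manifold with nef cotangent bundle every signed Chern number satisfies $0\leq(-1)^n c_I\leq(-c_1)^n$; these are Chern numbers of the nef bundle $T_M^{\ast}$, so no ampleness or projectivity is needed, and in particular $c_1^2c_2=c_1(T_M^{\ast})^2c_2(T_M^{\ast})\geq0$ when $n=4$. With this sandwich, the single identity $(-c_1)^n=0$ kills every Chern number in one line, in all dimensions $n\leq4$, and the case-by-case linear algebra becomes unnecessary.
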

\begin{proof}
By \cite[Coro. 3.5]{LZ}, in this situation all the signed Chern numbers are non-negative and bounded above by $(-c_1)^n$. Thus it suffices to show $(-c_1)^n=0$. Suppose on the contrary that $(-c_1)^n>0$. We have mentioned in Section \ref{introduction} that the sign of holomorphic bisectional curvature is dominated by that of Riemannian sectional curvature, while the non-negativity of the former famously implies the nefness of the cotangent bundle. Then Theorem \ref{vanish2-thm} implies that the signed Todd genus $(-1)^n\chi^0(M)>0$ and hence the $\chi_y$-genus $\chi_y(M)\neq0$. Therefore Proposition \ref{vanish1-prop} tells us that the Euler characteristic of $M$ is nonzero, which by Theorem \ref{JG} contradicts to the hypothesis of $\text{Z}\big(\pi_1(M)\big)$ being non-trivial.
\end{proof}
Now we explain that why the proof of Theorem \ref{vanish2-coro} leads to Corollary \ref{vanish3}.
As remarked in \cite[Remark 4.2]{LZ}, Conjecture \ref{question} is \emph{equivalent to} the simultaneous positivity and vanishing of all signed Chern numbers for K\"{a}hler manifold
with non-positive holomorphic bisectional curvature. Thus we have 
\begin{corollary}[=Corollary \ref{vanish3}]
Let $M$ be an $n$-dimensional non-positively curved K\"{a}hler manifold with $n\leq 4$. Then all the signed Chern numbers of $M$ are either positive or zero. Thus Conjecture \ref{question} holds true for these manifolds.
\end{corollary}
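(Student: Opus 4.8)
The plan is to prove the dichotomy that the corollary really asserts---that the signed Chern numbers of $M$ are \emph{simultaneously} all positive or all zero---and then to read off Conjecture \ref{question} from the equivalence recorded in \cite[Remark 4.2]{LZ}. Since non-positive sectional curvature dominates the holomorphic bisectional curvature (\cite[p. 178]{Zhe}), $M$ has non-positive holomorphic bisectional curvature and in particular nef cotangent bundle $T_M^{\ast}$, so all the algebraic-geometric tools used below are available. The starting observation is \cite[Coro. 3.5]{LZ}: every signed Chern number of $M$ is non-negative and bounded above by the top one $(-c_1)^n$. Thus the whole problem is governed by the single number $(-c_1)^n$, and I would organize the proof around whether this quantity vanishes.

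First I would dispose of the degenerate branch. If $(-c_1)^n=0$, then each signed Chern number lies in $[0,(-c_1)^n]=\{0\}$ and hence vanishes; this is the simultaneous-vanishing alternative. It remains to treat $(-c_1)^n>0$ and show that then every signed Chern number is strictly positive. Here I would rerun the opening of the proof of Theorem \ref{vanish2-thm}: nefness of $T_M^{\ast}$ together with $(-c_1)^n>0$ makes $K_M$ big and $M$ projective of general type, and the resulting absence of rational curves upgrades this to $K_M$ ample, i.e. $-c_1>0$. With ampleness in hand the Miyaoka--Yau inequality (\ref{Chern number inequality}) gives $c_2(-c_1)^{n-2}\geq\frac{n}{2(n+1)}(-c_1)^n>0$, which settles the signed Chern numbers of shape $c_1^{n-2}c_2$, while the inequalities of \cite[Thm 2.9]{LZ} (such as $c_1c_3-c_4\geq0$) control the remaining mixed monomials. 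The one number not reached by a pointwise estimate is the top Chern number $(-1)^nc_n=(-1)^n\chi(M)$, and I would obtain its positivity exactly as in the proof of Theorem \ref{vanish2-coro}: Theorem \ref{vanish2-thm} yields $(-1)^n\chi^0(M)>0$, so $\chi_y(M)\neq0$, whence Proposition \ref{vanish1-prop} forces $\chi(M)\neq0$, and combined with the non-negativity in \cite[Coro. 3.5]{LZ} this gives $(-1)^n\chi(M)>0$; feeding this into $c_1c_3\geq c_4$ then makes $c_1c_3>0$.

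The genuine obstacle I anticipate is the \emph{strict} positivity of the intermediate monomials that nef (Fulton--Lazarsfeld) positivity only guarantees to be non-negative---in dimension four the delicate case being $c_2^{2}$, a codimension-two class for which the usual Hodge-index or Khovanskii--Teissier arguments are unavailable. My strategy would be not to seek a single estimate but to chain the explicit inequalities of \cite[Thm 2.9]{LZ} together with the Miyaoka--Yau bound, bootstrapping off both $(-c_1)^n>0$ and the positivity of $(-1)^n\chi(M)$ already secured through the Todd-genus route; it is precisely for $n\leq4$ that this finite list of inequalities suffices, which is the reason for the dimension restriction. Once all signed Chern numbers are shown to be positive in this branch, the dichotomy is complete. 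Finally, a quasi-negative Ricci curvature places $M$ in the branch $(-c_1)^n>0$, so the equivalence of \cite[Remark 4.2]{LZ} converts the dichotomy into $(-1)^n\chi(M)>0$, which is Conjecture \ref{question} for these manifolds when $n\leq4$.
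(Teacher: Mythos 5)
Your overall architecture matches the paper's: split on whether $(-c_1)^n$ vanishes, and in the nondegenerate branch obtain $(-1)^n\chi(M)>0$ via Theorem \ref{vanish2-thm}, then $\chi_y(M)\neq 0$, then Proposition \ref{vanish1-prop}, exactly as in the proof of Theorem \ref{vanish2-coro}. The genuine gap is the step you yourself flag: the strict positivity of the intermediate signed Chern monomials (in dimension $4$: $c_1^2c_2$, $c_1c_3$, and especially $c_2^2$). It arises because you quote only the weak half of \cite[Coro. 3.5]{LZ} --- non-negativity plus the upper bound by $(-c_1)^n$ --- whereas the form the paper uses is a two-sided sandwich: for the nef bundle $T_M^{\ast}$ every Chern monomial satisfies $0\le c_n\le c_I\le c_1^n$, i.e.\ every signed Chern number of $M$ is bounded \emph{below} by the signed Euler characteristic $(-1)^n c_n=(-1)^n\chi(M)$. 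With that lower bound, once $(-1)^n\chi(M)>0$ is secured all signed Chern numbers are positive at a stroke; this is precisely why the paper's proof can say ``it suffices to show that when $(-c_1)^n>0$, then $(-1)^n\chi(M)>0$'' and needs nothing more.

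Your substitute plan --- ``chain the explicit inequalities of \cite[Thm 2.9]{LZ} together with the Miyaoka--Yau bound'' --- is never carried out, and for $c_2^2$ you explicitly concede you see no estimate; so as written the proof is incomplete at exactly the point you identify as the obstacle. (The gap is in fact fillable without the sandwich: Fulton--Lazarsfeld/Demailly--Peternell--Schneider Schur positivity for the nef bundle $T_M^{\ast}$ gives $c_2^2-c_1c_3\ge 0$, whence $c_2^2\ge c_1c_3\ge c_4>0$; but this inequality is neither among the two you cite from \cite[Thm 2.9]{LZ} nor supplied in your argument.) The rest of your proposal --- the degenerate branch $(-c_1)^n=0$, the ampleness of $K_M$, the Miyaoka--Yau step, the positivity of $(-1)^n\chi(M)$ via the Todd genus, and the final appeal to the equivalence in \cite[Remark 4.2]{LZ} --- is sound and agrees with the paper.
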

\begin{proof}
By \cite[Coro. 3.5]{LZ}, it suffices to show that when the Chern number $(-c_1)^n>0$, then the signed Euler characteristic  $(-1)^n\chi(M)>0$. This has exactly been done in the proof of Theorem \ref{vanish2-coro}.
\end{proof}

\section{Appendix}\label{appendix}
\subsection{Proof of Lemma \ref{L2expression}}\label{moredetailsonL2Hodgenumbers}
We first explain that the elliptic complexes (\ref{DC}) can be made into more compact two-step elliptic operators as follows. The choice of a Hermitian metric
on the complex manifold $(M,J)$
enables us to define the Hodge star operator $\ast$
and the formal adjoint operator
$\bar{\partial}^{\ast}=-\ast\bar{\partial}~\ast$ of the
$\bar{\partial}$-operator. Then for each $0\leq p\leq n$, we have
the following first-order elliptic operator $D_p$:
\be\label{GDC}D_p:=\bar{\partial}+\bar{\partial}^{\ast}:~\bigoplus_{\textrm{$q$
even}}\Omega^{p,q}(M)\longrightarrow\bigoplus_{\textrm{$q$
odd}}\Omega^{p,q}(M),\ee whose index by definition is
\be\begin{split}
\text{ind}(D_p)=&\text{dim}_{\mathbb{C}}(\text{ker}D_p)-
\text{dim}_{\mathbb{C}}(\text{coker}D_p)\\
=&\text{dim}_{\mathbb{C}}\bigoplus_{\text{$q$ even}}\mathcal{H}^{p,q}_{\bar{\partial}}(M)-
\text{dim}_{\mathbb{C}}\bigoplus_{\text{$q$ odd}}\mathcal{H}^{p,q}_{\bar{\partial}}(M)\\
=&\sum_{\text{$q$ even}}h^{p,q}(M)-\sum_{\text{$q$ odd}}h^{p,q}(M)\\
=&\chi^p(M),
\end{split}\nonumber\ee
where $\mathcal{H}^{p,q}_{\bar{\partial}}(M)$ are the spaces of complex-valued $\bar{\partial}$-harmonic forms and their dimensions are famously equal to $h^{p,q}(M)$.

The elliptic operators $D_p$ in (\ref{GDC}) on the Hermitian manifold $(M,g,J)$ can be lifted to its universal cover $(\widetilde{M},\widetilde{g},\widetilde{J})$: $$\widetilde{D_p}:~\bigoplus_{\text{$q$ even}}L^2\Omega^{p,q}(\widetilde{M})\longrightarrow\bigoplus_{\text{$q$ odd}}L^2\Omega^{p,q}(\widetilde{M}),$$ and one can define the \emph{$L^2$-index} of the lifted operators $\widetilde{D_p}$ by
\be
\begin{split}
\text{ind}_{\pi_1(M)}(\widetilde{D_p}):=&
\text{dim}_{\pi_1(M)}(\text{ker}\widetilde{D_p})-
\text{dim}_{\pi_1(M)}(\text{coker}\widetilde{D_p})\\
=&\text{dim}_{\pi_1(M)}\big[\bigoplus_{\text{$q$ even}}\mathcal{H}^{p,q}_{(2)}(\widetilde{M})\big]
-\text{dim}_{\pi_1(M)}\big[\bigoplus_{\text{$q$ odd}}\mathcal{H}^{p,q}_{(2)}(\widetilde{M})\big]\\
=&\sum_{\text{$q$ even}}\text{dim}_{\pi_1(M)}\mathcal{H}^{p,q}_{(2)}(\widetilde{M})
-\sum_{\text{$q$ odd}}\text{dim}_{\pi_1(M)}\mathcal{H}^{p,q}_{(2)}(\widetilde{M})
\qquad\big(\text{dim}_{\pi_1(M)}(\cdot)~\text{is additive}\big)
\\
=&\sum_{q=0}^n(-1)^qh^{p,q}_{(2)}(M).
\end{split}\nonumber\ee

The $L^2$-index theorem of Atiyah (\cite{At}) asserts that
$$\text{ind}(D_p)=\text{ind}_{\pi_1(M)}(\widetilde{D_p})
$$
and so we have the following crucial identities among $\chi^p(M)$ and the $L^2$-Hodge numbers $h^{p,q}_{(2)}(M)$ stated in (\ref{factchiphodgenumbers}):
\be
\chi^p(M)=\sum_{q=0}^n(-1)^qh^{p,q}_{(2)}(M).\nonumber\ee
\begin{remark}
Note that the d-bar operator $\bar{\partial}$ can also be defined for \emph{almost-complex} manifolds $(M,J)$, and the almost-complex structure $J$ is integrable if and only if $\bar{\partial}^2\equiv0$, i.e., (\ref{DC}) are \emph{complexes}.  So another advantage of using (\ref{GDC}) rather than (\ref{DC}) is that (\ref{GDC}) and hence the $\chi_y$-genus can be defined for general \emph{almost-complex} manifolds.
\end{remark}

\subsection{$L^2$-Betti numbers and $L^2$-Hodge decomposition}\label{sectionL2Bettinumber}
As in the discussions in Section \ref{read}, we can similarly define the $L^2$-Betti numbers $b_i^{(2)}(M)$ of a general $k$-dimensional compact smooth orientable manifold $M$ to be the Von Neumann dimension of the space of $L^2$ harmonic $i$-forms $\mathcal{H}^{i}_{(2)}(\widetilde{M})$ with respect to $\pi_1(M)$ under an arbitrarily chosen Riemannian metric:
$$b_i^{(2)}(M):=\text{dim}_{\pi_1(M)}
\mathcal{H}^{i}_{(2)}(\widetilde{M})\in\mathbb{R}_{\geq0},\qquad0\leq i\leq k.$$

The Euler characteristic $\chi(M)$ is the index of the following elliptic operator
\be\label{GDC2}d+d^{\ast}:~\bigoplus_{\textrm{$i$
even}}\Omega^{i}(M)\longrightarrow\bigoplus_{\textrm{$i$
odd}}\Omega^{i}(M).\ee
Parallel to the discussions in the above subsection, we can lift (\ref{GDC2}) to the universal cover $\widetilde{M}$ and apply the $L^2$-index theorem of Atiyah (\cite{At}) to yield
\be\label{L2Bettinumber}\chi(M)=\sum_{i=0}^k(-1)^ib_i^{(2)}(M).\ee

When $M$ is K\"{a}hler with complex dimension $n$, the $L^2$ Hodge-decomposition (cf. \cite[\S1.2]{Gr}) implies that
\be\label{hodge decomposition}b_i^{(2)}(M)=\sum_{p+q=i}h^{p,q}_{(2)}(M).\ee

\end{document}